\newcommand{\bb}{{\noindent}}
\newcommand{\R}{{\mathbb R}}
\newcommand{\C}{{\mathbb C}}
\newcommand{\E}{{\mathbb E}}
\newcommand{\PP}{{\mathbb P}}
\newcommand{\A}{{\mathcal A}}
\newcommand{\B}{{\mathcal B}}
\newcommand{\X}{{\mathcal X}}
\newcommand{\N}{{\mathbb N}}
\newcommand{\LL}{{\mathcal L}}
\newcommand{\I}{{\mathcal I}}
\newcommand{\J}{{\mathcal J}}
\newcommand{\K}{{\mathcal K}}
\newcommand{\1}{\mathds 1 }
\renewcommand{\d}{{\text{d}}}
\renewcommand*{\Re}{\operatorname{Re}}
\DeclareMathOperator{\tr}{tr}
\DeclareMathOperator{\Spec}{Spec}
\DeclareMathOperator{\diag}{diag}
\DeclareMathOperator{\Det}{Det}
\newtheorem{theorem}{Theorem}
\newtheorem{proposition}[theorem]{Proposition}
\newtheorem{corollary}[theorem]{Corollary}
\newtheorem{lemma}[theorem]{Lemma}
\theoremstyle{definition}
\newtheorem{remark}[theorem]{Remark}
\begin{document}

\title{Necessary and sufficient conditions for the  existence \\ of $\alpha$-determinantal processes}
\author{Franck Maunoury \footnote{Université Pierre et Marie Curie / Telecom ParisTech, France. E-mail: franck.maunoury@upmc.fr}}

\date{Submitted: June 2014 - Revised: April 2015}

\maketitle

\thispagestyle{empty} 

\begin{abstract} 
\bb We  give necessary and sufficient conditions for existence and infinite divisibility of $\alpha$-determinantal processes.
For that purpose we use results on negative binomial and ordinary binomial multivariate distributions.
\end{abstract}

\noindent {\bf Keywords:} determinantal process, permanental process, $\alpha$-determinantal process, $\alpha$-permanental process, infinitely divisible, complete monotonicity, fermion process, boson process.\\

\noindent {\bf AMS 2010 subject classifications:} 60G55, 60E07.

\section{Introduction}

Several authors have already established necessary and sufficient conditions for existence of $\alpha$-determinantal processes.

\bb Macchi in~\cite{MR0380979} and Soshnikov in its survey paper \cite{Soshnikov determinantal random fields}  gave a necessary and sufficient condition for determinantal processes with self-adjoint kernels, which corresponds to the case $\alpha = -1$.

\bb The same condition has also been established in a different way by Hough, Krishnapur, Peres and Vir{\'a}g in \cite{MR2216966} in the case $\alpha = -1$.
They have also given a sufficient condition of existence in the case $\alpha=1$ and self-adjoint kernel.

\bb In the special case when the configurations are on a finite space, the paper of Vere-Jones \cite{Vere-Jones alpha-permanents applications} provides necessary and sufficient conditions for any value of $\alpha$.

\bb Finally, Shirai and Takahashi have given sufficient conditions for the existence of an $\alpha$-determinantal process for any values of $\alpha$. However, in the case $\alpha > 0$, their sufficient condition (Condition B) in~\cite{MR2018415} does not work for the following example: the space is reduced to a single point space and the reference measure $\lambda$ is a unit point mass. With their notations, the two kernels $K$ and $J_\alpha$ are respectively reduced to two real numbers $k$ and $j_\alpha$, with
\begin{eqnarray*}
	j_\alpha = \dfrac{k}{1+ \alpha k}
\end{eqnarray*} 
We can choose $\alpha>0$ and $k<0$ such that $j_\alpha > 0$. Under these assumptions, Condition B is fulfilled but the obtained point process has a negative correlation function ($\rho_1(x) = k$), which has to be excluded, since a correlation function is an almost everywhere non-negative function.
\smallskip

\bb We are going to strengthen Condition B of Shirai and Takahashi and obtain a necessary and sufficient condition in the case $\alpha >0$. This is presented in Theorem~\ref{th CNS alpha-permanental}.

\bb Besides, in the case $\alpha<0$, we extend the result of Shirai and Takahashi to the case of non self-adjoint kernels and show that the obtained condition is also necessary (Theorems~\ref{th CNS alpha-determinantal si I+alpha K inversible} and \ref{th CNS alpha-determinantal si I+alpha K_Lambda not invertible}). Moreover, we show that $-1/\alpha$ is necesserely an integer. This has been noticed by Vere-Jones in \cite{Vere-Jones permanent determinant generalization} in the case of configurations on a finite space. 

\bb We also give a necessary and sufficient condition for the infinite divisibility of an $\alpha$-determinantal process for all values of $\alpha$.
\smallskip

\bb The main results are presented in Section~\ref{section main theorems}. Section \ref{section preliminaries} introduces the needed notation. 
In Section~\ref{section Fredhom det expansion}, we write a multivariate version of a Shirai and Takahashi formulae on Fredholm determinant expansion. 
Sections~\ref{section alpha-permanental} and~\ref{section alpha-determinantal} present the proofs of the results concerning respectively the cases $\alpha > 0$ and $\alpha<0$.
The proofs concerning infinite divisibility are presented in Section \ref{section infinite divisibility}.

\section{Preliminaries}
\label{section preliminaries}

Let $E$ be a locally compact Polish space. A locally finite configuration on $E$ is an integer-valued positive Radon measure on $E$. It can also be identified with a set $\{(M, \alpha_M) : M \in F \}$, where $F$ is a countable subset of $E$ with no accumulation points (i.e. a discrete subset of $E$) and, for each point in $F$, $\alpha_M$ is a non-null integer that corresponds to the multiplicity of the point $M$ (M is a multiple point if $\alpha_M \geq 2$).

\bb Let $\lambda$ be a Radon measure on $E$. 
Let $\X$ be  the space of the locally finite configurations  of $E$. The space $\X$ is endowed with the vague topology of measures, i.e. the smallest topology such that, for every real continuous function $f$ with  compact support,  defined on $E$ , the mapping
\begin{eqnarray*}
  \X \ni \xi \mapsto \left\langle f,\xi \right\rangle =
  \sum_{x\in \xi}f(x)=\int{f \d\xi}
\end{eqnarray*}
is continuous. Details on the topology of the configuration space can be found in~\cite{MR99d:58179}.

\bb We denote by $\B(\X)$ the corresponding $\sigma$-algebra.
 A point process on $E$ is a random variable with values in $\X$. We do not restrict ourselves to simple point processes, as the configurations in $\X$ can have multiple points. \\

\bb For a $n \times n$ matrix $A = (a_{[i j})_{1 \leq i,j \leq n}$, set:
\begin{eqnarray*}
	\det\nolimits_\alpha A = \sum_{\sigma \in \Sigma_n} \alpha^{n- \nu(\sigma)} \prod_{i=1}^n a_{i \sigma(i)}
\end{eqnarray*}
where $\Sigma_n$ is the set of all permutations on $\{1, \dots , n\}$ and $\nu(\sigma)$ is the number of cycles of the permutation $\sigma$.\\

For a relatively compact set $\Lambda \subset E$, the Janossy densities of a point process $\xi$ w.r.t. a Radon measure $\lambda$ are functions (when they exist) $j^\Lambda_n : E^n \rightarrow [0, \infty)$ for $n \in N$, such that
\begin{eqnarray*}
	& j^\Lambda_n (x_1, \dots, x_n) = n! \,\, \PP(\xi(\Lambda) = n) \, \pi^\Lambda_n (x_1, \dots, x_n)\\
	& j^\Lambda_0 (\emptyset) = \PP(\xi(\Lambda) = 0),
\end{eqnarray*}
where $\pi^\Lambda_n$ is the density with respect to $\lambda^{\otimes n}$ of the ordered set $(x_1, \dots, x_n)$, obtained by first sampling $\xi$, given that there are n points in $\Lambda$, then choosing uniformly an order between the points.

For $\Lambda_1, \dots, \Lambda_n$ disjoint subsets included in $\Lambda$, $\int_{\Lambda_1 \times \dots \times \Lambda_n} j_n^\Lambda (x_1, \dots, x_n) \lambda(dx_1) \dots \lambda(dx_n)$ is the probability that there is exactly one point in each subset $\Lambda_i$ ($1 \leq i \leq n$), and no other point elsewhere.

We recall that we have the following formula, for a non-negative mesurable function $f$ with support in a relatively compact set $\Lambda \subset E$:
\begin{eqnarray*}
	\E(f(\xi)) = f(\emptyset) \, j_0^\Lambda (\emptyset) + \sum_{n=1}^\infty \dfrac{1}{n!} \int_{\Lambda^n} f(x_1, \dots, x_n) \, j^\Lambda_n (x_1, \dots, x_n) \lambda(dx_1) \dots \lambda(dx_n).\\
\end{eqnarray*}

For $n \in \N$ and $a \in \R$, we denote $a^{(n)} = \prod_{i=0}^{n-1} (a - i)$.

The correlation functions (also called joint intensities) of a point process $\xi$ w.r.t. a Radon measure $\lambda$ are functions (when they exist) $\rho_n : E^n \rightarrow [0, \infty)$ for $n \geq 1$, such that for any family of mutually disjoint relatively compact subsets $\Lambda_1, \dots, \Lambda_d$ of $E$ and for any non-null integers $n_1, \dots, n_d$ such that $n_1 + \dots + n_d = n$, we have
\begin{eqnarray*}
	\E \left(\prod_{i=1}^d \xi(\Lambda_i)^{(n_i)} \right) = \int_{\Lambda_1^{n_1} \times  \dots \times \Lambda_d^{n_d}} \rho_n(x_1, \dots, x_n) \lambda(dx_1), \dots, \lambda(dx_n).
\end{eqnarray*}

Intuitively, for a simple point process, $\rho_n(x_1, \dots, x_n) \lambda(dx_1) \dots \lambda(dx_n)$ is the infinitesimal probability that there is at least one point in the vicinity of each $x_i$ (each vicinity having an infinitesimal volume $\lambda(dx_i)$ around $x_i$), $1 \leq i \leq n$.\\

\bb Let $\alpha$ be a real number and  $K$ a kernel from $E^2$ to $\R$ or $\C$. 
 An  $\alpha$-determinantal point process, with kernel $K$ with respect to  $\lambda$ (also called $\alpha$-permanental point process) 
 is defined, when it exists,  as a point process with the following  correlation functions $\rho_n, n \in \N$ with respect to $\lambda$:
\begin{eqnarray*}
	\rho_n(x_1, \dots, x_n) = \det\nolimits_\alpha (K(x_i,x_j))_{1 \leq i,j \leq n}.
\end{eqnarray*}
We denote by $\mu_{\alpha,K,\lambda}$  the probability distribution of such a point process.

\bb  We exclude the case  of  a point process almost surely reduced to the empty configuration.
\smallskip

\bb The case $\alpha = -1$ corresponds to a determinantal process and the case $\alpha = 1$ to a permanental process. 
The case $\alpha=0$ corresponds to the  Poisson point process. We suppose in the following that $\alpha \neq 0$.\\

\bb We will always assume that the kernel $K$ defines a locally trace class integral operator $\K$ on $L^2(E, \lambda)$. Under this assumption, one obtains an equivalent definition for the $\alpha$-determinantal process, using the following Laplace functional formula:
\begin{eqnarray}
\label{alpha-det Laplace functional}
\E_{\mu_{\alpha,K,\lambda}} \left[\exp \left(-\int_E f d\xi \right) \right] = \Det \left(\I + \alpha \K [1-e^{-f}] \right)^{-1/\alpha}
\end{eqnarray}
where $f$ is a compactly-supported non-negative function on $E$, $\K [1-e^{-f}]$ stands for $\sqrt{1-e^{-f}} \K \sqrt{1-e^{-f}}$, $\I$ is the identity operator on $L^2(E,\lambda)$ and $\Det$ is the Fredholm determinant. Details on the link between the correlation function and the Laplace functional of an $\alpha$-determinantal process can be found in the chapter 4 of~\cite{MR2018415}. Some explanations and useful formula on the Fredholm determinant are given in chapter 2.1 of~\cite{MR2018415}.\\
\smallskip

\bb For a subset $\Lambda \subset E$, set: $\K_\Lambda = p_\Lambda \K p_\Lambda$, where $p_\Lambda$ is the orthogonal projection operator from $L^2(E,\lambda)$ to the subspace $L^2(\Lambda,\lambda)$.\\

\bb For two subsets $\Lambda, \Lambda' \subset E$, set: $\K_{\Lambda \Lambda'} = p_\Lambda \K p_{\Lambda'}$, and denote by $K_{\Lambda \Lambda'}$ its kernel. We have for any $x,y \in E$, $K_{\Lambda \Lambda'} (x,y) = \1_\Lambda (x) \, \1_{\Lambda'} (y) \, K(x,y)$. \\

 \bb When $\I + \alpha \K$ (resp. $\I + \alpha \K_\Lambda$) is invertible, $\J_\alpha$ (resp. $\J_\alpha^\Lambda$) is the integral operator defined by:
	$\J_\alpha = \K (\I + \alpha \K)^{-1} \text{ (resp. } \J_\alpha^\Lambda = \K_\Lambda (\I + \alpha \K_\Lambda)^{-1} \text{)}$
 and we denote by $J_\alpha$ (resp. $J_\alpha^\Lambda$) its kernel. Note that $\J_\alpha^\Lambda$ is not the orthogonal projection of $\J_\alpha$ on $L^2(\Lambda, \lambda)$. \\

\section{Main results}
\label{section main theorems}

\begin{theorem}
\label{th CNS alpha-permanental}
For $\alpha > 0$, there exists an $\alpha$-permanental process with kernel $K$ iff:
\begin{itemize}
\item $\Det(\I+\alpha \K_\Lambda) \geq 1$, for any compact set $\Lambda \subset E$
\item $\det_\alpha (J_\alpha^\Lambda (x_i, x_j))_{1 \leq i,j \leq n} \geq 0$, for any $n \in \N$, any compact set $\Lambda \subset E$ and any $\lambda^{\otimes n}$-a.e. $(x_1, \dots, x_n) \in \Lambda^n$.\\
\end{itemize}

\end{theorem}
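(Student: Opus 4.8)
The plan is to reduce the problem to each relatively compact window $\Lambda$ and to read off both conditions from the Janossy densities on $\Lambda$. The tool I would rely on is the Fredholm expansion of Section~\ref{section Fredhom det expansion}, which expresses the Janossy densities of an $\alpha$-determinantal process in the form
\begin{eqnarray*}
	j_n^\Lambda(x_1, \dots, x_n) = \Det(\I + \alpha \K_\Lambda)^{-1/\alpha}\, \det\nolimits_\alpha\!\left(J_\alpha^\Lambda(x_i,x_j)\right)_{1 \leq i,j \leq n}.
\end{eqnarray*}
Since the restriction of the process to $\Lambda$ is determined by the family $(j_n^\Lambda)_n$, and since a process on $E$ exists iff its restrictions to compacts form a consistent projective family, the existence question becomes: for every compact $\Lambda$, are the $j_n^\Lambda$ non-negative and do they sum to one?

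For necessity I would assume $\mu_{\alpha,K,\lambda}$ exists and fix a compact $\Lambda$. Sending $f \to +\infty$ on $\Lambda$ in the Laplace functional~\eqref{alpha-det Laplace functional}, so that $1-e^{-f}\to\1_\Lambda$ and $\K[1-e^{-f}]\to\K_\Lambda$, identifies $j_0^\Lambda = \PP(\xi(\Lambda)=0) = \Det(\I+\alpha\K_\Lambda)^{-1/\alpha}$. As $\alpha>0$ and the left-hand side is a probability in $(0,1]$, this forces $\Det(\I+\alpha\K_\Lambda)\geq1$, the first condition; in particular $\I+\alpha\K_\Lambda$ is invertible, so $J_\alpha^\Lambda$ and the full formula above make sense. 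Non-negativity of the $j_n^\Lambda$, after division by the strictly positive prefactor, then yields $\det\nolimits_\alpha(J_\alpha^\Lambda(x_i,x_j))\geq0$ for $\lambda^{\otimes n}$-a.e.\ $(x_1,\dots,x_n)$, the second condition.

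For sufficiency I would assume both conditions and run the construction in reverse. The first condition gives $\Det(\I+\alpha\K_\Lambda)\geq1>0$, hence invertibility of $\I+\alpha\K_\Lambda$ and existence of $J_\alpha^\Lambda$, so the displayed formula defines candidate densities; the second condition and $\alpha>0$ make them non-negative with $j_0^\Lambda\leq1$. That they sum to one I would obtain from the generating identity $\sum_{n\geq0}\frac1{n!}\int_{\Lambda^n}\det\nolimits_\alpha(J_\alpha^\Lambda(x_i,x_j))\,\lambda^{\otimes n}(dx)=\Det(\I-\alpha\J_\alpha^\Lambda)^{-1/\alpha}$ of Section~\ref{section Fredhom det expansion}, combined with the algebraic identity $\I-\alpha\J_\alpha^\Lambda=(\I+\alpha\K_\Lambda)^{-1}$, which cancels the prefactor exactly. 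This produces a genuine point process on each $\Lambda$.

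The hard part will be the final gluing: verifying that these local laws are consistent under restriction, so that Kolmogorov's theorem yields a point process on $E$, and that its correlation functions equal $\det\nolimits_\alpha(K(x_i,x_j))$. The delicacy is that $\J_\alpha^\Lambda$ is \emph{not} $p_\Lambda\J_\alpha p_\Lambda$, so compatibility of the windows cannot be read directly off $J_\alpha$; I would instead pass back through the inclusion--exclusion relation between Janossy densities and correlation functions---again furnished by the Fredholm expansion of Section~\ref{section Fredhom det expansion}---which returns exactly the kernel $\det\nolimits_\alpha(K)$ and closes the equivalence.
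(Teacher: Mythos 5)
Your overall architecture is the same as the paper's: necessity is read off the generating function $\E(z^{\xi(\Lambda)})=\Det(\I+\alpha(1-z)\K_\Lambda)^{-1/\alpha}$ and the a.e.-uniqueness of Janossy densities, and sufficiency builds, on each compact $\Lambda$, a finite point process from the candidate densities $j_n^\Lambda=\Det(\I+\alpha\K_\Lambda)^{-1/\alpha}\det\nolimits_\alpha(J_\alpha^\Lambda(x_i,x_j))$, with normalization obtained exactly as in the paper from the identity $(\I+\alpha\K_\Lambda)(\I-\alpha\J_\alpha^\Lambda)=\I$ together with the Fredholm expansion at $z=1$, followed by a Kolmogorov gluing.

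The one genuine problem is in the step you yourself flag as ``the hard part.'' To get consistency of the family $(\LL(\xi_\Lambda))$ you propose to show that each local process has correlation functions $\det\nolimits_\alpha(K(x_i,x_j))$ and to conclude that restrictions agree. That does not close as stated: equality of correlation functions does not by itself imply equality of distributions (a moment-problem issue), and $\alpha>0$ is precisely the regime of negative-binomial-type tails where a determinacy criterion (Lenard/Carleman-type growth bound on $\frac{1}{n!}\int_{\Lambda^n}\rho_n$) must be verified before moments identify the law; moreover, extracting the correlation functions from the Janossy densities by inclusion--exclusion requires its own convergence justification. The paper avoids correlation functions entirely: if $\supp f\subset\Lambda\subset\Lambda'$, then $\sqrt{1-e^{-f}}\,p_{\Lambda'}=\sqrt{1-e^{-f}}=\sqrt{1-e^{-f}}\,p_{\Lambda}$, hence $\K_{\Lambda'}[1-e^{-f}]=\K_{\Lambda}[1-e^{-f}]$ as operators, so $\xi_{\Lambda'}|_\Lambda$ and $\xi_\Lambda$ have literally the same Laplace functional $\Det(\I+\alpha\K_\Lambda[1-e^{-f}])^{-1/\alpha}$; since Laplace functionals always determine the law, consistency is immediate, and the glued process satisfies \eqref{alpha-det Laplace functional}, which identifies it as the $\alpha$-permanental process with kernel $K$. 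Replacing your correlation-function detour by this one-line operator identity repairs the proof; the rest of your argument matches the paper's.
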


\begin{remark}
	{\rm Even when $E$ is a finite set, note that the second condition of Theorem~\ref{th CNS alpha-permanental} consists in an infinite number of computations.  Finding a simpler condition, that could be checked in a finite number of steps is still an open problem.}\\
\end{remark}

\begin{theorem}
\label{th Spec alpha-permanental}
For $\alpha > 0$, if an $\alpha$-permanental process with kernel $K$ exists, then:
\begin{eqnarray*}
	\Spec \K_\Lambda \subset \{ z \in \C : \Re z > - \dfrac{1}{2 \alpha} \} \text{ , for any compact set } \Lambda \subset E.\\
\end{eqnarray*}

\end{theorem}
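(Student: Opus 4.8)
The plan is to read the spectral conclusion as a statement about a probability generating function. First I would specialize the Laplace functional \eqref{alpha-det Laplace functional} to $f=(-\log s)\,\1_\Lambda$ for $s\in(0,1]$: since then $1-e^{-f}=(1-s)\1_\Lambda$ and $\exp(-\int_E f\,d\xi)=s^{\xi(\Lambda)}$, this yields
\[
G(s):=\E_{\mu_{\alpha,K,\lambda}}\!\left[s^{\xi(\Lambda)}\right]=\Det\!\left(\I+\alpha(1-s)\K_\Lambda\right)^{-1/\alpha},\qquad s\in(0,1],
\]
and letting $s\to 0^+$ extends this to the void probability at $s=0$ (consistent with the condition $\Det(\I+\alpha\K_\Lambda)\ge 1$). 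Because the process exists, $\xi(\Lambda)$ is a genuine $\N$-valued random variable, so $G(s)=\sum_{n\ge 0}p_n s^n$ with $p_n=\PP(\xi(\Lambda)=n)\ge 0$ and $\sum_n p_n=1$; hence the radius of convergence $R$ of this power series satisfies $R\ge 1$.

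Next I would locate the singularities of the right-hand side. Since $\K_\Lambda$ is trace class, $D(s):=\Det(\I+\alpha(1-s)\K_\Lambda)$ is entire with $D(s)=\prod_i\bigl(1+\alpha(1-s)z_i\bigr)$, where $\{z_i\}$ are the nonzero eigenvalues of $\K_\Lambda$, so $\sum_i|z_i|<\infty$ and $z_i\to 0$. Thus $\Phi(s):=D(s)^{-1/\alpha}$ is analytic away from the points $s_i=1+1/(\alpha z_i)$, at each of which it blows up (a genuine, non-removable singularity because $\alpha>0$), and these points do not accumulate in any bounded region. By the identity theorem, $\sum_n p_n s^n=\Phi$ on the disc $|s|<R$, whence $R=\min_i|s_i|$.

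The crux is to upgrade $R\ge 1$ to $R>1$. Here I would invoke Pringsheim's theorem: as $p_n\ge 0$, if $R<\infty$ then $s=R$ must be a singular point of $\Phi$, i.e. $R\in\{s_i\}$. But $s_i=1$ is impossible (it would force $1/(\alpha z_i)=0$), and $\Phi$ is analytic at $s=1$ with $\Phi(1)=1$; so $R=1$ cannot occur, and with $R\ge 1$ this forces $R>1$. (If $R=\infty$, then $\K_\Lambda$ has no nonzero eigenvalue, so $\Spec\K_\Lambda=\{0\}$ and the conclusion is immediate since $\alpha>0$.) Finally I would translate $R=\min_i|s_i|>1$ back into spectral terms: writing $\zeta=\alpha z_i$, one has $|s_i|>1\iff|\zeta+1|^2>|\zeta|^2\iff 2\Re\zeta+1>0\iff\Re z_i>-1/(2\alpha)$; since the remaining spectral value $0$ also satisfies $\Re 0>-1/(2\alpha)$, this gives $\Spec\K_\Lambda\subset\{z\in\C:\Re z>-1/(2\alpha)\}$.

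I expect the main obstacle to be the rigorous singularity analysis underlying the Pringsheim step: verifying that the only singularities of $\Phi$ are the zeros of the Fredholm determinant, that they are non-removable, and that the power series of $G$ continues analytically across every point of $[0,\infty)$ except those zeros — in particular the clean exclusion of $R=1$ via $1\notin\{s_i\}$, which is what sharpens the inequality from $\ge$ to $>$. The elementary translation identities and the bound $R\ge 1$ are then routine.
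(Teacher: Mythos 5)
Your proposal is correct and follows essentially the same route as the paper's proof: the paper expands the very same power series (there written as $\Det(\I+\alpha\K_\Lambda)^{-1/\alpha}\,\Det(\I-\alpha z\J_\alpha^\Lambda)^{-1/\alpha}$, whose coefficients are the Janossy densities, i.e.\ your $p_n$ up to the positive constant $\Det(\I+\alpha\K_\Lambda)^{-1/\alpha}$), uses nonnegativity of the coefficients together with finiteness at $z=1$ to force every zero of the Fredholm determinant to have modulus strictly larger than $1$ (your Pringsheim step, carried out there by hand via $|f(z)|\le f(|z|)$), and then converts this into the spectral statement. The only difference is cosmetic: the paper passes through $\Spec\J_\alpha^\Lambda\subset\{|z|<1/\alpha\}$ and the correspondence $\nu\mapsto\nu/(1+\alpha\nu)$, whereas you translate the zero locations $s_i=1+1/(\alpha z_i)$ of $\Det(\I+\alpha(1-s)\K_\Lambda)$ into the half-plane condition directly.

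One point where your write-up asserts something false and needs exactly the repair you anticipate: for $1/\alpha\notin\N$, $\Phi=D^{-1/\alpha}$ is \emph{not} a single-valued analytic function on $\C\setminus\{s_i\}$, since continuation around a zero of $D$ of multiplicity $m$ multiplies any branch by $e^{-2\pi i m/\alpha}$. Consequently the step ``$G=\Phi$ on $|s|<R$, whence $R=\min_i|s_i|$'' is slightly circular as stated: to define a branch of $D^{-1/\alpha}$ on the disc of convergence you must already know that no $s_i$ lies inside it. This is repairable: first exclude $s_i$ from the disc by comparing logarithmic derivatives (the identity $G'/G=-(1/\alpha)\,D'/D$ holds near $(0,1]$ and extends by the identity theorem for meromorphic functions; at an interior $s_i$ the right-hand side would have residue $-m/\alpha<0$, impossible since $G$ is analytic there, so its logarithmic derivative has nonnegative integer residues), then take the branch on the now zero-free disc, and run the Pringsheim argument on $\{|s|<R\}\cup B(R,\epsilon)$, which is simply connected. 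Note that the paper sidesteps, rather than addresses, the same subtlety, by only ever working on discs centered at $0$ on which the determinant is nonvanishing.
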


We remark that this condition is equivalent to
\begin{eqnarray*}
	\Spec \J_\alpha^\Lambda \subset \{ z \in \C : |z| < \dfrac{1}{\alpha} \} \text{ , for any compact set } \Lambda \subset E
\end{eqnarray*}

\begin{theorem}
\label{th CNS alpha-determinantal si I+alpha K inversible}
For $\alpha < 0$ and $\K$ an integral operator such that $\I+\alpha \K_\Lambda$ is invertible, for any compact set $\Lambda \subset E$, an $\alpha$-determinantal process with kernel $K$ exists iff the two following conditions are fulfilled:
\begin{itemize}
\item[(i)] $-1/\alpha \in \N$
\item[(ii)] $\det (J_\alpha^\Lambda (x_i, x_j))_{1 \leq i,j \leq n} \geq 0$, for any $n \in \N$, any compact set $\Lambda \subset E$ and any $\lambda^{\otimes n}$-a.e. $(x_1, \dots, x_n) \in \Lambda^n$.
\end{itemize}

\end{theorem}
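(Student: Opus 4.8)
The plan is to localise the problem to each compact set $\Lambda$, where the process becomes a finite point process determined by its Janossy densities, and to read off the two conditions from the non-negativity of those densities together with the requirement that they define a probability law. Starting from the Laplace functional~(\ref{alpha-det Laplace functional}) with $g := e^{-f}$, I would first use $\Det(\I+AB)=\Det(\I+BA)$ to replace $\K_\Lambda[1-g]=\sqrt{1-g}\,\K_\Lambda\sqrt{1-g}$ by $(1-g)\K_\Lambda$, and then factor through $\I+\alpha\K_\Lambda$ (invertible by hypothesis) to obtain
\[
\Det\bigl(\I+\alpha\K_\Lambda[1-g]\bigr)=\Det(\I+\alpha\K_\Lambda)\,\Det\bigl(\I-\alpha\,g\,\J_\alpha^\Lambda\bigr).
\]
Raising to the power $-1/\alpha$ and setting $m:=-1/\alpha>0$, the Laplace functional of the restricted process reads $\Det(\I+\alpha\K_\Lambda)^{m}\,\Det(\I-\alpha\,e^{-f}\J_\alpha^\Lambda)^{m}$, the first factor being $\PP(\xi(\Lambda)=0)$.

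I would obtain the integer condition~(i) from the count variable alone. Taking $f=(-\log z)\,\1_\Lambda$ turns the formula into
\[
\E\bigl[z^{\xi(\Lambda)}\bigr]=\Det(\I+\alpha\K_\Lambda)^{m}\,\Det(\I-\alpha z\,\J_\alpha^\Lambda)^{m}=\prod_i\,(q_i+p_i z)^{m},
\]
where $p_i=-\alpha\kappa_i$, $q_i=1-p_i$ and the $\kappa_i$ are the eigenvalues of $\K_\Lambda$. Each factor $(q_i+p_i z)^m$ is the probability generating function of a $\mathrm{Binomial}(m,p_i)$ law exactly when $m\in\N$; for non-integer $m$ a factor with $p_i\in(0,1)$ has Taylor coefficients $q_i^m\binom{m}{k}(p_i/q_i)^k$ that change sign once $k>m$, so $\E[z^{\xi(\Lambda)}]$ fails to be a genuine generating function. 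Turning this scalar heuristic into a proof for the full operator — controlling the spectrum of $\K_\Lambda$ uniformly over all compact $\Lambda$ and ruling out cancellations between factors — is the step I expect to be the main obstacle, and it is precisely where the cited results on ordinary binomial multivariate distributions are needed; they force $-1/\alpha\in\N$.

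Once $m\in\N$ is available, the second factor $\Det(\I-\alpha\,e^{-f}\J_\alpha^\Lambda)^{m}$ is an integer power of a single Fredholm determinant, hence the generating functional of the superposition of $m$ independent copies of an $L$-ensemble with kernel $-\alpha J_\alpha^\Lambda$. Expanding this single determinant by the multivariate formula of Section~\ref{section Fredhom det expansion} produces ordinary determinants $\det\bigl(-\alpha J_\alpha^\Lambda(x_i,x_j)\bigr)$ — this is exactly why condition~(ii) features the ordinary $\det$ rather than $\det_\alpha$. The resulting Janossy densities $j_n^\Lambda$ are non-negative for all $n$ and $\lambda^{\otimes n}$-a.e.\ $(x_1,\dots,x_n)$ if and only if $\det\bigl(J_\alpha^\Lambda(x_i,x_j)\bigr)_{1\le i,j\le n}\ge 0$ for every $n$, which is condition~(ii); the total-mass normalisation is automatic by evaluating the Laplace functional at $f=0$.

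It then remains to assemble the two implications. For necessity, existence of the process makes all $j_n^\Lambda$ non-negative, yielding~(ii), while the count computation yields~(i). For sufficiency, conditions~(i) and~(ii) make the displayed formula define, for each $\Lambda$, a genuine finite point process; the compatibility of these processes across nested compacts — inherited from the single kernel $K$ — produces a point process on $E$ by a Kolmogorov-type extension, and a direct check gives its correlation functions as $\det_\alpha(K(x_i,x_j))$, as required.
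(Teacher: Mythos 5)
Your sufficiency direction is essentially sound, and it is a nice variant of the paper's route: with $m=-1/\alpha\in\N$ in hand, condition (ii) lets you realise the restriction to each compact $\Lambda$ as the superposition of $m$ i.i.d.\ determinantal processes with kernel $-\alpha J_\alpha^\Lambda$ (normalisation being automatic since $\Det(\I-\alpha\J_\alpha^\Lambda)=\Det(\I+\alpha\K_\Lambda)^{-1}$), and consistency plus Kolmogorov extension finishes; this is in effect the paper's Corollary~\ref{corol equiv exist -1/m det and det process} turned into a construction. Both gaps are in the necessity direction. First, your argument for (i) is, as you yourself flag, a scalar heuristic that does not close: for non-self-adjoint $\K_\Lambda$ the eigenvalues $\kappa_i$ are complex, the factors $(q_i+p_iz)^m$ admit no binomial interpretation, and cancellations among infinitely many factors cannot be excluded this way. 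The paper avoids spectra entirely: it computes $\det\nolimits_\alpha 1_n=\prod_{j=0}^{n-1}(1+j\alpha)$, observes that this is negative for some $n$ whenever $-1/\alpha\notin\N$, and evaluates the correlation function at a diagonal tuple $(x_0,\dots,x_0)$ with $K(x_0,x_0)>0$, getting $\rho_n=K(x_0,x_0)^n\det\nolimits_\alpha 1_n<0$, a contradiction. Even that step needs care, since kernels are only defined $\lambda^{\otimes 2}$-a.e.\ and the diagonal is a null set; the paper's Lemma~\ref{lemma p.s. kernel} (a Lusin-type argument producing a good version of the kernel) is what makes the diagonal evaluation legitimate.

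Second, and more seriously, your claim that the Janossy densities are non-negative ``if and only if'' $\det(J_\alpha^\Lambda(x_i,x_j))_{1\le i,j\le n}\ge 0$ assumes exactly what has to be proved. Existence of the process gives non-negativity of the Janossy densities of $\xi|_\Lambda$, and by the Fredholm expansion these are proportional to $\det\nolimits_{-1/m}(J_\alpha^\Lambda(x_i,x_j))$, not to ordinary determinants. Your superposition picture cannot convert this: knowing that the $m$-th power of $\Det(\I+\alpha\K_\Lambda)\,\Det(\I-\alpha e^{-f}\J_\alpha^\Lambda)$ is a Laplace functional of a point process does not imply that the base functional (the ``single copy'') is one. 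The passage from $\det\nolimits_{-1/m}\ge 0$ to $\det\ge 0$ is the genuine combinatorial content of the theorem: in the paper it is Lemma~\ref{lemma matrix det_alpha equiv det}, applied to block matrices $A[n_1,\dots,n_d]$ built by repeating points, and its hard direction is Vere-Jones's induction showing that if the $m$-th power of $\det(I+ZA)$ has all multivariate coefficients non-negative then so does $\det(I+ZA)$ itself; Lemma~\ref{lemma p.s. kernel} is then needed once more to shuttle between a.e.\ and pointwise statements. Without these two ingredients your necessity argument does not go through.
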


The arguments developed in the proof of Theorem~\ref{th CNS alpha-determinantal si I+alpha K inversible} shows that actually $(ii) \implies (i)$. Consequently, Condition $(ii)$ is itself a necessary and sufficient condition. It also implies that $\Det(\I + \beta \K_\Lambda)>0 $ for any $\beta \in [\alpha,0]$ and any compact $\Lambda \subset E$.\\

\begin{theorem}
\label{th CNS alpha-determinantal si I+alpha K_Lambda not invertible}
For $\alpha < 0$ and $\K$ an integral operator such that for some compact set $\Lambda_0 \subset E$, $\I+\alpha \K_{\Lambda_0}$ is not invertible, an $\alpha$-determinantal process with kernel $K$ exists iff:
\begin{itemize}
\item[(i')] $-1/\alpha \in \N$
\item[(ii')] $\det (J_\beta^\Lambda (x_i, x_j))_{1 \leq i,j \leq n} \geq 0$, for any $n \in \N$, any $\beta \in (\alpha,0)$, any compact set $\Lambda \subset E$ and any $\lambda^{\otimes n}$-a.e. $(x_1, \dots, x_n) \in \Lambda^n$.
\end{itemize}

\end{theorem}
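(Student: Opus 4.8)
The plan is to reduce the whole statement to the already-settled invertible case, Theorem~\ref{th CNS alpha-determinantal si I+alpha K inversible}, applied at the parameters $\beta\in(\alpha,0)$ for which $\I+\beta\K_\Lambda$ is invertible, and then to reach the non-invertible value $\alpha$ by a limiting procedure $\beta\downarrow\alpha$. Throughout I would use that an $\alpha$-determinantal process on $E$ exists if and only if, for every compact $\Lambda\subset E$, its restriction to $\Lambda$ is a well-defined finite point process and these restrictions are consistent; consistency is automatic here because the Laplace functional~\eqref{alpha-det Laplace functional} only sees $\K_\Lambda[1-e^{-f}]=\sqrt{1-e^{-f}}\,\K\,\sqrt{1-e^{-f}}$ when $f$ is supported in $\Lambda$. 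The two bridges between the parameter $\beta$ and the kernel are the degree-$n$ homogeneity of $\det\nolimits_\alpha$ and the identity $\J_{-1}[-\beta K]=-\beta\,\J_\beta$, which turns the determinantal ($\alpha=-1$) process with kernel $-\beta K$ into the operator $\J_\beta$.

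For necessity, suppose $\mu_{\alpha,K,\lambda}$ exists. Independent $p$-thinning multiplies the $n$-point correlation by $p^{n}$, hence sends $\det\nolimits_\alpha(K(x_i,x_j))$ to $\det\nolimits_\alpha(pK(x_i,x_j))$; so for every $p\in(0,1)$ the thinned process is the $\alpha$-determinantal process $\mu_{\alpha,pK,\lambda}$, which therefore exists. Fix a compact $\Lambda$ and a value $\beta=\alpha p\in(\alpha,0)$ for which $\I+\beta\K_\Lambda$ is invertible (the only values to treat, since elsewhere $J_\beta^\Lambda$ is undefined). The restriction of $\mu_{\alpha,pK,\lambda}$ to $\Lambda$ has nonnegative Janossy densities, and the Fredholm expansion of Section~\ref{section Fredhom det expansion} identifies them, up to the strictly positive void-probability factor $\Det(\I+\alpha p\K_\Lambda)^{-1/\alpha}$, with $\det\nolimits_\alpha(J_{\alpha p}^\Lambda(x_i,x_j))$ (using $J_\alpha[pK]=p\,J_{\alpha p}$ and homogeneity). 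Thus $\det\nolimits_\alpha(J_{\alpha p}^\Lambda(x_i,x_j))\geq 0$; the combinatorial arguments of Theorem~\ref{th CNS alpha-determinantal si I+alpha K inversible} (those proving $(ii)\Rightarrow(i)$ and upgrading $\alpha$-determinant positivity to ordinary-determinant positivity via the negative-binomial/binomial analysis) then give $-1/\alpha\in\N$ and $\det(J_{\alpha p}^\Lambda(x_i,x_j))\geq 0$. As $p$ runs through $(0,1)$, $\beta=\alpha p$ runs through $(\alpha,0)$, yielding (i') and (ii').

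For sufficiency, assume (i') and (ii') and set $m=-1/\alpha\in\N$. Fix a compact $\Lambda$ and choose $\beta_k\downarrow\alpha$ in $(\alpha,0)$ avoiding the at most countable set where $\I+\beta\K_\Lambda$ is singular. For each $k$, the expansion of Section~\ref{section Fredhom det expansion} furnishes the candidate Janossy densities $\hat j_n=\Det(\I+\beta_k\K_\Lambda)\,(-\beta_k)^{n}\det(J_{\beta_k}^\Lambda(x_i,x_j))$ of the determinantal process with kernel $-\beta_k K$ on $\Lambda$. By (ii') the factor $(-\beta_k)^{n}\det(J_{\beta_k}^\Lambda)$ is nonnegative, and since $\sum_n\frac1{n!}\int_{\Lambda^n}\hat j_n\,d\lambda^{\otimes n}=\Det(\I+\beta_k\K_\Lambda)\,\Det(\I+\beta_k\K_\Lambda)^{-1}=1$ (because $\I-\beta_k\J_{\beta_k}^\Lambda=(\I+\beta_k\K_\Lambda)^{-1}$), the common scalar $\Det(\I+\beta_k\K_\Lambda)$ is forced to be strictly positive; hence the $\hat j_n$ are genuine Janossy densities of a finite point process $\nu_k^\Lambda$ on $\Lambda$, with Laplace functional $\Det(\I+\beta_k\K_\Lambda[1-e^{-f}])$. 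Letting $k\to\infty$ these converge to $\Det(\I+\alpha\K_\Lambda[1-e^{-f}])$; a point-process limit theorem then provides a finite process $\nu_\infty^\Lambda$ with this Laplace functional. Superposing $m$ independent copies of $\nu_\infty^\Lambda$ gives a process on $\Lambda$ with Laplace functional $\Det(\I+\alpha\K_\Lambda[1-e^{-f}])^{m}=\Det(\I+\alpha\K_\Lambda[1-e^{-f}])^{-1/\alpha}$, i.e. the restriction prescribed by~\eqref{alpha-det Laplace functional}. These restrictions are consistent, so they assemble into the desired $\alpha$-determinantal process with kernel $K$.

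The main obstacle is exactly this passage to the boundary in the sufficiency part: because $\I+\alpha\K_{\Lambda_0}$ is not invertible, $J_\alpha^\Lambda$ does not exist and the clean Janossy formula of Theorem~\ref{th CNS alpha-determinantal si I+alpha K inversible} is unavailable at $\beta=\alpha$. The Fredholm determinant itself is harmless, being an entire function of a trace-class operator and hence continuous as $\beta_k\K_\Lambda[1-e^{-f}]\to\alpha\K_\Lambda[1-e^{-f}]$; the delicate point is to guarantee that the limiting functional is the Laplace functional of an honest, non-defective point process rather than one that has lost mass through the number of points escaping to infinity (note that $\Det(\I+\alpha\K_\Lambda)=0$ here, so $\nu_\infty^\Lambda$ is almost surely nonempty, which is compatible with a valid process). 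I would control this by the uniform first-moment bound $\E\,\nu_k^\Lambda(\Lambda)=\int_\Lambda(-\beta_k)K(x,x)\,d\lambda\to\int_\Lambda(-\alpha)K(x,x)\,d\lambda<\infty$, giving tightness of the point counts; together with the continuity of the limit functional as $f\equiv 0$ this legitimizes the convergence $\nu_k^\Lambda\Rightarrow\nu_\infty^\Lambda$ and identifies its Laplace functional. The remaining points — selecting $\beta_k$ off the discrete singular set and the automatic positivity of $\Det(\I+\beta_k\K_\Lambda)$ — are handled as above.
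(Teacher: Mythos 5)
Your proof is correct, and its two halves relate differently to the paper's own argument. The necessity half is essentially the paper's proof: thin the process independently with retention probability $p\in(0,1)$, observe that the thinned process is $\alpha$-determinantal with kernel $pK$, and run the invertible-case machinery of Theorem~\ref{th CNS alpha-determinantal si I+alpha K inversible} at $\beta=\alpha p$, translating through $\J_\alpha[pK]=p\,\J_{\alpha p}$; sweeping $p$ over $(0,1)$ gives (i') and (ii') exactly as the paper does (your extra care in restricting to $\beta$ where $\I+\beta\K_\Lambda$ is invertible is, if anything, more precise than the paper's terse appeal to Theorem~\ref{th CNS alpha-determinantal si I+alpha K inversible}). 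The sufficiency half shares the paper's key idea --- approximate $\alpha$ by $\beta_k\downarrow\alpha$ inside the invertible regime and pass to the limit --- but implements the limit genuinely differently. The paper stays global: Theorem~\ref{th CNS alpha-determinantal si I+alpha K inversible} yields $\alpha$-determinantal processes $\xi_{p_k}$ with kernels $p_kK$ on all of $E$, tightness of $(\LL(\xi_{p_k}))_k$ on $\X$ is obtained by combining convergence of the Laplace transforms of the counts on an exhausting sequence of compacts with the compactness criterion in $\X$, and Prokhorov's theorem produces a weak limit identified as $\mu_{\alpha,K,\lambda}$. You instead work compact-by-compact and at parameter $-1$: you build the finite determinantal process with kernel $-\beta_kK$ on $\Lambda$ directly from its Janossy expansion --- so the invertible-case theorem is not even needed as a black box, and strict positivity of $\Det(\I+\beta_k\K_\Lambda)$ falls out of the normalization identity $\Det(\I-\beta_k\J^\Lambda_{\beta_k})=\Det(\I+\beta_k\K_\Lambda)^{-1}$ rather than being a separate lemma --- then pass to the limit using elementary first-moment tightness on the compact $\Lambda$, superpose $m=-1/\alpha$ independent copies (Corollary~\ref{corol equiv exist -1/m det and det process} in disguise, and the one place where (i') is used), and assemble the family via consistency and the Kolmogorov theorem. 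What your route buys: tightness is trivial on a compact (a uniform first moment suffices; no construction of a compact $\Gamma\subset\X$), and the role of $-1/\alpha\in\N$ is made completely transparent. What the paper's route buys: the limit is taken once, globally, so consistency of the limit is inherited from the $\xi_{p_k}$ rather than re-verified, whereas you must check that the per-compact limits $\nu_\infty^\Lambda$ form a consistent family --- which does hold, since $\K_{\Lambda'}[1-e^{-f}]=\K_\Lambda[1-e^{-f}]$ for $f$ supported in $\Lambda\subset\Lambda'$, the same observation both proofs rely on. The steps you leave standard (identification of weak limits through Laplace functionals on continuous $f$, extension to measurable $f$) are omitted at the same level of detail in the paper's own proof, so I see no genuine gap.
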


As in Theorem~\ref{th CNS alpha-determinantal si I+alpha K inversible}, we also have $(ii') \implies (i')$ and Condition $(ii')$ is itself a necessary and sufficient condition. \\

Note that $\I+\alpha \K_{\Lambda_0}$ is not invertible if and only if there is almost surely at least one point in $\Lambda_0$.

\begin{corollary}
\label{corol equiv exist -1/m det and det process}
	For $m$ a positive integer, the existence of a $(-1/m)$-determinantal process with kernel $K$ is equivalent to the existence of a determinantal process with the kernel $\dfrac{K}{m}$.\\
\end{corollary}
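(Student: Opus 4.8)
The plan is to deduce the corollary from the necessary-and-sufficient criteria already established in Theorems~\ref{th CNS alpha-determinantal si I+alpha K inversible} and~\ref{th CNS alpha-determinantal si I+alpha K_Lambda not invertible}, applied on one side with $\alpha=-1/m$ and kernel $K$, and on the other side with $\alpha=-1$ and kernel $K/m$. Writing $\K$ for the operator attached to $K$, so that $\K/m$ is attached to $K/m$, the starting point is that both parameter choices feed the \emph{same} operator into the Fredholm determinant: for every compact $\Lambda$,
\[
  \I + (-1/m)\,\K_\Lambda \;=\; \I - \tfrac{1}{m}\,\K_\Lambda \;=\; \I + (-1)\,(\K/m)_\Lambda .
\]
In particular $\I+\alpha\K_\Lambda$ is invertible in the first problem exactly when the corresponding operator is invertible in the second, so the same theorem (the invertible case, Theorem~\ref{th CNS alpha-determinantal si I+alpha K inversible}, or the non-invertible case, Theorem~\ref{th CNS alpha-determinantal si I+alpha K_Lambda not invertible}) governs both processes.

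Next I would compare the operators $\J_\alpha^\Lambda=\K_\Lambda(\I+\alpha\K_\Lambda)^{-1}$ for the two problems. Denoting by $J_\beta^{\Lambda,(K)}$ and $J_\beta^{\Lambda,(K/m)}$ the corresponding kernels, a direct computation gives, for every admissible parameter,
\[
  \J_\beta^{\Lambda,(K/m)}
   = \tfrac{1}{m}\,\K_\Lambda\bigl(\I+\tfrac{\beta}{m}\,\K_\Lambda\bigr)^{-1}
   = \tfrac{1}{m}\,\J_{\beta/m}^{\Lambda,(K)} ,
\]
so the two $J$-kernels differ only by the scalar factor $1/m$, the parameter substitution $\beta\mapsto\beta/m$ mapping $(-1,0)$ bijectively onto $(-1/m,0)$; in particular $\J_{-1}^{\Lambda,(K/m)}=\tfrac{1}{m}\J_{-1/m}^{\Lambda,(K)}$.

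These two facts make the equivalence of the existence criteria immediate. Condition~(i)/(i'), namely $-1/\alpha\in\N$, holds automatically in both settings since $-1/(-1/m)=m$ and $-1/(-1)=1$ are positive integers. For the positivity condition~(ii)/(ii'), the scaling of the $J$-kernel together with the homogeneity $\det(\tfrac1m A)=m^{-n}\det A$ of an $n\times n$ determinant yields
\[
  \det\bigl(J_\beta^{\Lambda,(K/m)}(x_i,x_j)\bigr)_{1\le i,j\le n}
   = m^{-n}\,\det\bigl(J_{\beta/m}^{\Lambda,(K)}(x_i,x_j)\bigr)_{1\le i,j\le n} .
\]
Since $m^{-n}>0$, one determinant is non-negative $\lambda^{\otimes n}$-a.e. precisely when the other is, and the substitution matches the parameter ranges appearing in~(ii'). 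Hence the hypotheses of the relevant theorem hold for the $(-1/m)$-determinantal process with kernel $K$ if and only if they hold for the determinantal process with kernel $K/m$, and the two processes exist simultaneously.

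The computations are routine; the only point needing care is to verify that the substitution $\beta\mapsto\beta/m$ correctly matches the intermediate-parameter condition~(ii') in the non-invertible case and that the invertible/non-invertible dichotomy is preserved, which is exactly the content of the operator identity above. As a conceptual check I would note that, at the level of Laplace functionals~\eqref{alpha-det Laplace functional}, the identity reads $L_{-1/m,K}=(L_{-1,K/m})^{m}$, exhibiting the $(-1/m)$-determinantal process with kernel $K$ as the superposition of $m$ independent copies of the determinantal process with kernel $K/m$; this gives one implication directly and explains the equivalence probabilistically.
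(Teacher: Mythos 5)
Your proof is correct and is exactly the argument the paper intends: the corollary is stated there without a separate proof, as an immediate consequence of Theorems~\ref{th CNS alpha-determinantal si I+alpha K inversible} and~\ref{th CNS alpha-determinantal si I+alpha K_Lambda not invertible}, and your two scaling identities, $\I+(-1/m)\K_\Lambda=\I+(-1)(\K/m)_\Lambda$ (so the invertible/non-invertible dichotomy is the same for both problems) and $\J_{-1}^{\Lambda,(K/m)}=\tfrac{1}{m}\,\J_{-1/m}^{\Lambda,(K)}$ with $\det(\tfrac{1}{m}A)=m^{-n}\det A$ (so conditions (ii)/(ii') match under $\beta\mapsto\beta/m$), are precisely the computations that make this deduction go through. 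The closing superposition remark via the Laplace functional is a correct and illuminating bonus, though it only yields one implication on its own.
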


\begin{corollary}
\label{corol CNS alpha-determinantal si K selfadjoint}
For $\alpha < 0$ and $\K$ a self-adjoint operator, an $\alpha$-determinantal process with kernel $K$ exists iff:
\begin{itemize}
\item $-1/\alpha \in \N$
\item $\Spec \K \subset [0,-1/\alpha]$
\end{itemize}

\end{corollary}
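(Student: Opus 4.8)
The plan is to reduce the self-adjoint case to the classical determinantal case $\alpha=-1$ and then invoke the known Macchi--Soshnikov criterion. Throughout, recall that $\K$ is assumed locally trace class, so that for every compact $\Lambda$ the operator $\K_\Lambda$ is self-adjoint and trace class, with real spectrum.

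First I would settle the necessity of $-1/\alpha \in \N$. If an $\alpha$-determinantal process with kernel $K$ exists, then exactly one of the two complementary situations occurs: either $\I+\alpha \K_\Lambda$ is invertible for every compact $\Lambda \subset E$, in which case Theorem~\ref{th CNS alpha-determinantal si I+alpha K inversible} applies and condition $(i)$ gives $-1/\alpha \in \N$; or $\I+\alpha \K_{\Lambda_0}$ fails to be invertible for some compact $\Lambda_0$, in which case Theorem~\ref{th CNS alpha-determinantal si I+alpha K_Lambda not invertible} applies and condition $(i')$ gives $-1/\alpha \in \N$. Either way, writing $m = -1/\alpha$, we obtain $m \in \N$ and $\alpha = -1/m$.

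Next I would transfer the problem to $\alpha = -1$ via Corollary~\ref{corol equiv exist -1/m det and det process}: the existence of a $(-1/m)$-determinantal process with kernel $K$ is equivalent to the existence of a determinantal process with kernel $K/m$. Since $K$ is self-adjoint, so is $K/m$, and the corresponding operator $\K/m$ is again locally trace class. At this point the classical necessary and sufficient condition for determinantal processes with self-adjoint kernel, due to Macchi~\cite{MR0380979} and Soshnikov~\cite{Soshnikov determinantal random fields} (see also~\cite{MR2216966}), applies: such a process exists if and only if $\Spec(\K/m) \subset [0,1]$. Rewriting this spectral condition in terms of $\K$, and using $\Spec(\K/m) = \{\kappa/m : \kappa \in \Spec \K\}$ with $m>0$, one has $\Spec(\K/m) \subset [0,1]$ if and only if $\Spec \K \subset [0,m] = [0,-1/\alpha]$.

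Combining the three steps yields the stated equivalence: existence forces $-1/\alpha \in \N$ and, once this holds, existence is equivalent to $\Spec \K \subset [0,-1/\alpha]$. I do not expect a serious obstacle here, since the analytic content has already been isolated in Corollary~\ref{corol equiv exist -1/m det and det process} and in the classical $\alpha=-1$ criterion. The only points requiring a word of care are that the dichotomy underlying the two theorems genuinely exhausts all cases, and that dividing by $m$ preserves the hypotheses (self-adjointness and the local trace-class property) needed to apply the Macchi--Soshnikov theorem; both are immediate, the latter because these properties are stable under multiplication by the positive scalar $1/m$.
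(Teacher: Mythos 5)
Your proof is correct, but it is not the argument the paper itself gives for this corollary: it is essentially the alternative route that the paper records in the remark immediately following its own proof, where it notes that the known $\alpha=-1$ criterion of Macchi and Soshnikov \cite{MR0380979, Soshnikov determinantal random fields, MR2216966} combined with Corollary~\ref{corol equiv exist -1/m det and det process} ``gives a direct proof''. The paper's own proof stays entirely inside its machinery: for sufficiency it passes from $\Spec \K \subset [0,-1/\alpha]$ to $\Spec \K_\Lambda \subset [0,-1/\alpha]$ for every compact $\Lambda$, deduces $\Spec \J_\alpha^\Lambda \subset [0,\infty)$ (or $\Spec \J_\beta^\Lambda \subset [0,\infty)$ for $\beta \in (\alpha,0)$ when some $\I+\alpha\K_{\Lambda_0}$ is not invertible), converts this spectral information into the pointwise condition $\det (J_\alpha^\Lambda(x_i,x_j))_{1\leq i,j\leq n} \geq 0$ via Lemma~\ref{lemma CNS Spec J subset [0,infty)}, and then invokes Theorem~\ref{th CNS alpha-determinantal si I+alpha K inversible} or~\ref{th CNS alpha-determinantal si I+alpha K_Lambda not invertible}; for necessity it runs the same chain backwards, obtaining $\Spec \K_\Lambda \subset [0,-1/\beta)$ for all $\beta \in (\alpha,0)$ and globalizing by self-adjointness. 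You instead outsource all of that analytic work to the classical $\alpha=-1$ theorem after rescaling the kernel by $1/m$. Both routes share the first step (the dichotomy between Theorems~\ref{th CNS alpha-determinantal si I+alpha K inversible} and~\ref{th CNS alpha-determinantal si I+alpha K_Lambda not invertible} forcing $-1/\alpha \in \N$), and your handling of it is fine. What your route buys is brevity and a clean conceptual reduction; what it costs is self-containment: it leans on the external Macchi--Soshnikov theorem and on Corollary~\ref{corol equiv exist -1/m det and det process}, which the paper states without an explicit proof (it follows from Theorems~\ref{th CNS alpha-determinantal si I+alpha K inversible} and~\ref{th CNS alpha-determinantal si I+alpha K_Lambda not invertible} because the operators $\J$ attached to the pair $(-1/m, K)$ and to the pair $(-1, K/m)$ differ by the positive scalar $1/m$, so the sign conditions on their minors coincide); a fully rigorous write-up along your lines should at least indicate that verification. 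The paper's route, by contrast, shows how the spectral condition emerges directly from positivity of the $J$-kernel determinants via Lemma~\ref{lemma CNS Spec J subset [0,infty)}, with no appeal to results outside the paper.
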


This result is well known in the case $\alpha = -1$ (see for example Hough, Krishnapur, Peres and Vir{\'a}g in \cite{MR2216966}).

The sufficient part of this necessary and sufficient condition corresponds to condition A in \cite{MR2018415} of Shirai and Takahashi.\\

\begin{theorem}
\label{th infinie divisibilité alpha-déterminantal}
	For $\alpha < 0$, an $\alpha$-determinantal process in never infinitely divisible.\\
\end{theorem}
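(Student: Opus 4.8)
The plan is to translate infinite divisibility into a statement about the existence of an auxiliary family of $\alpha'$-determinantal processes, and then to contradict the integrality condition $-1/\alpha' \in \N$ furnished by Theorems~\ref{th CNS alpha-determinantal si I+alpha K inversible} and~\ref{th CNS alpha-determinantal si I+alpha K_Lambda not invertible}.

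First I would recall that a point process with law $\mu$ is infinitely divisible precisely when, for every integer $n \geq 1$, the $n$-th root of its Laplace functional is again the Laplace functional of some point process. Applying this to $\mu = \mu_{\alpha,K,\lambda}$ and using the expression~\eqref{alpha-det Laplace functional}, I would compute
\[
\E_{\mu_{\alpha,K,\lambda}} \left[\exp \left(-\int_E f \, d\xi \right) \right]^{1/n} = \Det \left(\I + \alpha \K [1-e^{-f}] \right)^{-1/(\alpha n)}.
\]
The key observation is that, since $(\alpha n)(\K/n) = \alpha \K$, the right-hand side equals
\[
\Det \left(\I + (\alpha n) \, (\K/n) \, [1-e^{-f}] \right)^{-1/(\alpha n)},
\]
which is exactly the Laplace functional~\eqref{alpha-det Laplace functional} of an $(\alpha n)$-determinantal process with kernel $K/n$ with respect to $\lambda$. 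Because the Laplace functional characterizes the distribution of a point process, this $n$-th root is a genuine Laplace functional if and only if the process $\mu_{\alpha n, K/n, \lambda}$ exists. Hence $\mu_{\alpha,K,\lambda}$ is infinitely divisible if and only if $\mu_{\alpha n, K/n, \lambda}$ exists for every $n \geq 1$.

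Finally I would derive the contradiction. Assume $\mu_{\alpha,K,\lambda}$ exists with $\alpha < 0$; then Theorems~\ref{th CNS alpha-determinantal si I+alpha K inversible} and~\ref{th CNS alpha-determinantal si I+alpha K_Lambda not invertible} force $-1/\alpha =: m \in \N$. If the process were infinitely divisible, then for each $n \geq 1$ the $(\alpha n)$-determinantal process with kernel $K/n$ would exist; since $\alpha n < 0$, the same theorems require $-1/(\alpha n) = m/n \in \N$. Choosing any $n > m$ yields $m/n \in (0,1)$, which is not a positive integer, a contradiction. Therefore no $\alpha$-determinantal process with $\alpha < 0$ is infinitely divisible.

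The main obstacle lies in the identification step of the second paragraph: one must argue carefully that although the $n$-th divisor of an infinitely divisible $\mu$ is a priori only some point process, the uniqueness of a point process given its Laplace functional pins it down to be exactly $\mu_{\alpha n, K/n, \lambda}$, so that its existence becomes a genuine necessary condition. Once this reduction is secured, the integrality obstruction $m/n \notin \N$ for $n > m$ closes the argument immediately.
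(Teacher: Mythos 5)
Your proof is correct and takes essentially the same approach as the paper: the paper likewise identifies the $N$-th convolution root, via the Laplace functional~\eqref{alpha-det Laplace functional}, as an $(\alpha N)$-determinantal process with kernel $K/N$, and then contradicts the necessary integrality condition $-1/(\alpha N) \in \N$ for $N$ large. Your identification step is spelled out more explicitly than in the paper, but the substance is identical.
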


\begin{theorem}
\label{th infinie divisibilité général alpha-permanental}
	For $\alpha > 0$, an $\alpha$-determinantal process is infinitely divisible iff
	\begin{itemize}
\item $\Det(\I+\alpha \K_\Lambda) \geq 1$, for any compact set $\Lambda \subset E$
\item $ \sum_{\sigma \in \Sigma_n : \nu(\sigma) = 1} \prod_{i=1}^n J^\Lambda_\alpha(x_i, x_{\sigma(i)}) \geq 0$, for any $n \in \N$, any compact set $\Lambda \subset E$ and $\lambda^{\otimes n}$-a.e. $(x_1, \dots, x_n) \in \Lambda^n$.
\end{itemize}
	
\end{theorem}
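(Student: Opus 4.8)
The plan is to reduce infinite divisibility to an existence question already settled by Theorem~\ref{th CNS alpha-permanental}, and then to decode the resulting conditions through a cycle/partition expansion of the $\alpha$-determinant. I would start from the Laplace functional characterization of infinite divisibility: a point process is infinitely divisible iff, for every integer $n\ge 1$, the $n$-th root of its Laplace functional is again the Laplace functional of a point process. By~(\ref{alpha-det Laplace functional}), that $n$-th root equals $\Det(\I+\alpha\K[1-e^{-f}])^{-1/(n\alpha)}$, which is exactly the Laplace functional of the $(n\alpha)$-determinantal process with kernel $K/n$. Since the Laplace functional determines the law, infinite divisibility is equivalent to the existence, for every $n\ge 1$, of that $(n\alpha)$-determinantal process. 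Applying Theorem~\ref{th CNS alpha-permanental} to the parameter $n\alpha>0$ and the kernel $K/n$, a direct computation gives $\K'_\Lambda=\frac1n\K_\Lambda$, hence $\J$-kernel $\frac1n J_\alpha^\Lambda$; so the first existence condition reads $\Det(\I+\alpha\K_\Lambda)\ge 1$, independent of $n$ and matching the first condition of the statement, while the second becomes $\det\nolimits_{n\alpha}\big(\frac1n J_\alpha^\Lambda(x_i,x_j)\big)_{1\le i,j\le p}\ge 0$ for all $p$, all compact $\Lambda$, a.e.\ tuples, and all $n\ge 1$.

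The heart of the argument is a cycle expansion. Grouping the permutations in $\det\nolimits_{n\alpha}$ by the set partition induced by their cycles, and using that on a block $B$ a cycle contributes a weight $(n\alpha)^{|B|-1}n^{-|B|}=\alpha^{|B|-1}/n$, one obtains
\begin{eqnarray*}
\det\nolimits_{n\alpha}\Big(\tfrac1n J_\alpha^\Lambda(x_i,x_j)\Big)_{1\le i,j\le p}=\sum_{\{B_1,\dots,B_k\}}\ \prod_{j=1}^k \frac{\alpha^{|B_j|-1}}{n}\,C_\alpha(B_j),
\end{eqnarray*}
where the outer sum runs over the partitions of $\{1,\dots,p\}$ and $C_\alpha(B)=\sum_{\sigma}\prod_{i\in B}J_\alpha^\Lambda(x_i,x_{\sigma(i)})$ is the single-cycle sum over the cyclic permutations $\sigma$ of $B$. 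The key point is that the second condition of the statement is precisely $C_\alpha(\{1,\dots,p\})\ge 0$ for every $p$ and a.e.\ tuple; applied to sub-tuples it yields $C_\alpha(B)\ge 0$ for every block $B$.

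Sufficiency is then immediate: since $\alpha>0$ and $n\ge 1$, each factor $\alpha^{|B_j|-1}C_\alpha(B_j)/n$ is non-negative, so the whole partition sum is non-negative for every $n\ge 1$; together with $\Det(\I+\alpha\K_\Lambda)\ge 1$ this gives the existence of every $n$-th root, hence infinite divisibility. For necessity, I would keep only the coarsest partition (the single block $\{1,\dots,p\}$, i.e.\ the $k=1$ term): multiplying the expansion by $n$ and letting $n\to\infty$ leaves exactly $\alpha^{p-1}C_\alpha(\{1,\dots,p\})$, which must be $\ge 0$ because each $\det\nolimits_{n\alpha}(\cdots)\ge 0$, and this forces the single-cycle condition.

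I expect the main obstacle to be the sufficiency step, specifically recognizing the partition expansion and that the single-cycle sums act as non-negative cluster weights, rather than arguing termwise in the variable $1/n$ (where the individual coefficients need not have a definite sign). A secondary, purely technical point is the measure-theoretic bookkeeping: one must secure $C_\alpha(B)\ge 0$ simultaneously for all sub-tuples, and pass to the limit $n\to\infty$ on a single full-measure set obtained by intersecting the a.e.\ sets over the countably many $n$.
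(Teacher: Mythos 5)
Your proposal is correct and matches the paper's own argument essentially step for step: both reduce infinite divisibility via the Laplace functional to the existence of an $(N\alpha)$-permanental process with kernel $K/N$ for every $N$, both verify the hypotheses of Theorem~\ref{th CNS alpha-permanental} through the same cycle/partition expansion of $\det\nolimits_{N\alpha}$ (with the single-cycle sums as non-negative factors) for sufficiency, and both extract the single-cycle condition for necessity by normalizing by the appropriate power of $N\alpha$ and letting $N\to\infty$. Your explicit handling of the a.e.\ bookkeeping (intersecting the null sets over sub-tuples and over $N$) is a point the paper leaves implicit, but it is not a departure in method.
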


This theorem gives a more general condition for infinite-divisibility of an $\alpha$-permanental process than the condition given by Shirai and Takahashi in~\cite{MR2018415}.\\

\begin{theorem}
\label{th infinie divisibilité self-adjoint alpha permanental}
	For $\K$ a a real symmetric locally trace class operator and $\alpha > 0$, an $\alpha$-permanental process is infinitely divisible iff
	\begin{itemize}
\item $\Det(\I+\alpha \K_\Lambda) \geq 1$, for any compact set $\Lambda \subset E$
\item $ J^\Lambda_\alpha(x_1, x_2) \dots J^\Lambda_\alpha(x_{n-1}, x_n) J^\Lambda_\alpha(x_n, x_1) \geq 0$, for any $n \in \N$, any compact set $\Lambda \subset E$ and $\lambda^{\otimes n}$-a.e. $(x_1, \dots, x_n) \in \Lambda^n$.\\
\end{itemize}
	
\end{theorem}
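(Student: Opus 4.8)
The plan is to deduce this statement from Theorem~\ref{th infinie divisibilité général alpha-permanental} by showing that, for a real symmetric $\K$, the cycle-sum condition appearing there collapses to the single-cycle-product condition stated here. Since $\K$ is self-adjoint, so is $\J_\alpha^\Lambda = \K_\Lambda(\I + \alpha\K_\Lambda)^{-1}$, and hence its kernel is symmetric: $J_\alpha^\Lambda(x,y) = J_\alpha^\Lambda(y,x)$ for $\lambda^{\otimes 2}$-a.e.\ $(x,y)$. In particular the product of $J_\alpha^\Lambda$ around a single cycle $\sigma$ is unchanged when $\sigma$ is replaced by $\sigma^{-1}$. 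The determinant condition $\Det(\I + \alpha\K_\Lambda) \geq 1$ is common to both theorems, so the whole matter reduces to the equivalence of the two second conditions.

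First I would treat the easy implication (the condition stated here implies infinite divisibility). Because $\lambda^{\otimes n}$ is invariant under permutations of coordinates, non-negativity of the standard-cycle product $J_\alpha^\Lambda(x_1,x_2)\cdots J_\alpha^\Lambda(x_n,x_1)$ for a.e.\ $(x_1,\dots,x_n)$ is equivalent, after relabelling the points by a suitable $\tau$ (writing a single cycle as $\tau(1\,2\,\cdots\,n)\tau^{-1}$), to non-negativity of $\prod_{i=1}^n J_\alpha^\Lambda(x_i, x_{\sigma(i)})$ for a.e.\ $(x_1,\dots,x_n)$, for each \emph{fixed} single cycle $\sigma$. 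Summing over all $\sigma$ with $\nu(\sigma)=1$ yields exactly the cycle-sum condition of Theorem~\ref{th infinie divisibilité général alpha-permanental}, and infinite divisibility follows.

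The converse is the substantial direction. Applying Theorem~\ref{th infinie divisibilité général alpha-permanental} I obtain the cycle-sum condition for every $n$. Taking $n=3$ and using symmetry, the two single $3$-cycles contribute equal products, so the condition reads $2\,J_\alpha^\Lambda(x_1,x_2)J_\alpha^\Lambda(x_2,x_3)J_\alpha^\Lambda(x_3,x_1) \geq 0$; that is, every triangle product is non-negative a.e. I would then propagate triangle positivity to the full $n$-cycle through the fan-triangulation identity
\begin{equation*}
\prod_{k=2}^{n-1} \Big[ J_\alpha^\Lambda(x_1,x_k)\,J_\alpha^\Lambda(x_k,x_{k+1})\,J_\alpha^\Lambda(x_{k+1},x_1)\Big]
= \Big(\prod_{j=3}^{n-1} J_\alpha^\Lambda(x_1,x_j)^2\Big)\prod_{i=1}^{n} J_\alpha^\Lambda(x_i,x_{i+1}),
\end{equation*}
where indices are read modulo $n$ and symmetry is used to pair each chord with its reverse. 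The left-hand side is a product of triangle products, hence non-negative a.e.; the squared factor is non-negative; so the standard-cycle product $\prod_{i}J_\alpha^\Lambda(x_i,x_{i+1})$ is non-negative wherever the fan of chords from the apex is non-vanishing.

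The hard part will be the degenerate set on which the chosen chords vanish, where the identity is vacuous. Here I would exploit two freedoms: the apex of the fan may be any of the $n$ vertices (all yielding the same conclusion by relabelling), and the hypothesis holds for every $n$ and a.e.\ configuration. Combining the identities for apex $1$ and apex $2$ already forces non-negativity off the set where simultaneously every vertex carries a vanishing chord while all cycle edges are non-zero; I would argue this residual set is $\lambda^{\otimes n}$-null, either by approximation through configurations with non-vanishing chords or, more conceptually, via Harary's balance theorem: triangle positivity on the complete graph means the sign field $\operatorname{sign} J_\alpha^\Lambda$ is balanced, so there is a measurable $\varepsilon : E \to \{-1,+1\}$ with $\varepsilon(x)\,J_\alpha^\Lambda(x,y)\,\varepsilon(y) \geq 0$ a.e., whence every cycle product, in particular the standard one, is non-negative. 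Making the passage from the a.e.\ triangle inequality to a \emph{measurable} global balancing function rigorous on the continuum $E$ is the principal technical obstacle.
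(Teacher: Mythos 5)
Your overall strategy (reduce everything to Theorem~\ref{th infinie divisibilité général alpha-permanental}, easy direction by relabelling, chord-splitting identities for the converse) points in the right direction, but the hard direction has a genuine gap, and it is not merely the technical obstacle you flag at the end. You keep only the $n=3$ consequence of the cycle-sum hypothesis (triangle positivity) and try to bootstrap all longer cycles from it. That implication is simply false: triangle positivity a.e.\ does not imply cycle positivity a.e., and the ``residual set'' you hope is null can have positive measure. Concretely, partition a compact $\Lambda$ into four sets $A_1,\dots,A_4$ of positive measure and let $J$ be symmetric with $J\equiv 1$ on $A_i\times A_i$ for all $i$ and on $A_i\times A_{i+1}$ for $i=1,2,3$, $J\equiv -1$ on $A_4\times A_1$, and $J\equiv 0$ on the diagonal blocks $A_1\times A_3$ and $A_2\times A_4$. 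Every triangle product is then $0$ or $1$ (any triangle meeting three distinct blocks uses a diagonal pair), yet for $(x_1,\dots,x_4)\in A_1\times A_2\times A_3\times A_4$, a set of positive $\lambda^{\otimes 4}$-measure, the $4$-cycle product equals $-1$ and every fan identity is vacuous, whichever apex you choose. Of course this $J$ violates the cycle-sum condition at $n=4$ (the sum there equals $-2$), which is exactly the point: the hypothesis for every $n$ cannot be discarded, so no argument starting only from triangles can close the degenerate case. The Harary route fails for the same reason: ``all triangles positive implies balanced'' is a statement about \emph{complete} signed graphs, and the zeros of $J$ are missing edges.

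The paper's proof (following Griffiths and Griffiths--Milne) repairs precisely this defect: it proceeds by induction on $n$, invoking the cycle-sum hypothesis at level $n$ itself for the degenerate configurations. If all chords vanish, i.e.\ $J_\alpha^\Lambda(x_i,x_j)=0$ whenever $i,j$ are not cyclically consecutive (the ``elementary cycles''), then every $n$-cycle other than $(1\,2\cdots n)$ and its inverse uses a chord, so the cycle sum collapses to twice the standard cycle product, which is therefore non-negative; if instead some chord $J_\alpha^\Lambda(x_i,x_j)\neq 0$, then your identity in its simplest form,
\begin{eqnarray*}
	P_n \, J_\alpha^\Lambda(x_i,x_j)^2 = P'\,P'',
\end{eqnarray*}
with $P',P''$ the two shorter cycle products obtained by cutting along that chord, gives $P_n\geq 0$ from the induction hypothesis. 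Finally, the a.e.\ issue you call the principal technical obstacle is handled in the paper not by constructing a measurable balancing function but by Lemma~\ref{lemma p.s. kernel}: one passes to a version of the kernel for which the hypothesis holds for \emph{all} tuples, runs the pointwise combinatorial argument above, and transfers back to the a.e.\ statement.
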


Following Griffith and Milne's remark in~\cite{Griffiths and Milne}, when an $\alpha$-permanental process with kernel $K$ exists and is infinitely divisible, we can replace $J^\alpha_\Lambda$ by $|J^\alpha_\Lambda|$ and obtain an $\alpha$-permanental process with the same probability distribution.\\

\begin{remark}
\label{rq CNS alpha-permanental}
In Theorem~\ref{th CNS alpha-permanental},~\ref{th infinie divisibilité général alpha-permanental} and~\ref{th infinie divisibilité self-adjoint alpha permanental} , the condition
\begin{eqnarray*}
	\Det(\I+\alpha \K_\Lambda) \geq 1 \text{, for any compact set } \Lambda \subset E
\end{eqnarray*}

can be replaced by
\begin{eqnarray*}
	\Det(\I+\alpha \K_\Lambda) > 0 \text{, for any compact set } \Lambda \subset E.
\end{eqnarray*}

\end{remark}

\section{Fredholm determinant expansion}
\label{section Fredhom det expansion}

In \cite{MR2018415}, Shirai and Takahashi have proved the following formula
\begin{eqnarray}
\label{formula dec det Fredholm}
\Det (\I - \alpha z \K)^{-1/\alpha} = \sum_{n=0}^\infty \dfrac{z^n}{n!} \int_{E^n} \det\nolimits_\alpha (K(x_i,x_j))_{1 \leq i,j \leq n} \lambda(dx_1) \dots \lambda(dx_n)
\end{eqnarray}
for a trace class integral operator $\K$ with kernel $K$ and for $z \in \C$ such that $\| \alpha z \K \| < 1$. In the case where the space $E$ is finite, this formula is also given by Shirai in~\cite{Shirai}.\\

As $z \mapsto \Det (\I - \alpha z \K)$ is analytic on $\C$ and $z \mapsto z^{-1/\alpha}$ is analytic on $\C^*$, we obtain that $z \mapsto \Det (\I - \alpha z \K_{\Lambda,\alpha})^{-1/\alpha}$ is analytic on $\{z \in \C : \I-\alpha z \K_{\Lambda,\alpha} \text{ invertible} \}$.

Therefore, the formula can be extended to the open disc $D$, centered in $0$ with radius $R = \sup \{ r \in \R_+ : \forall z \in \C, |z| < r \Rightarrow \I - \alpha z \K \text{ is invertible}\}$.

$D$ is the open disc of center 0 and radius $1/ \| \alpha \K \|$, if the operator $\K$ is self-adjoint, but it can be larger if $\K$ is not self-adjoint. \\   

As remarked by Shirai and Takahashi, the formula $\eqref{formula dec det Fredholm}$ is valid for any $z\in \C$ if $-1/\alpha \in \N$.\\

The following proposition extends $\eqref{formula dec det Fredholm}$ to a multivariate case.

\begin{proposition}
	Let $\Lambda \subset E$ be a relatively compact set, $\Lambda_1, \dots \Lambda_d$ mutually disjoint subsets of $\Lambda$ and $\K$ a locally trace class integral operator with kernel $K$. \\
	We have the following formula
\begin{align}
\label{formula dec det Fredholm multivariate}
	& \Det \left(\I - \alpha \sum_{k=1}^d z_k \, \K_{\Lambda_k \Lambda}\right)^{-1/\alpha}\nonumber \\
	& \qquad = \sum_{n_1, \dots, n_d=0}^\infty \, \left(\prod_{k=1}^d \dfrac{z_k^{n_k}}{n_k!}\right) \int_{\Lambda_1^{n_1} \times \dots \times \Lambda_d^{n_d}} \det\nolimits_\alpha (K(x_i,x_j))_{1 \leq i,j \leq n} \, \lambda(dx_1) \dots \lambda(dx_n)
\end{align}	
for any $z_1, \dots, z_d \in \C$, such that $\I - \alpha \gamma \sum_{k=1}^d z_k \K_{\Lambda_k \Lambda}$ is invertible for any complex number $\gamma$ satisfying $|\gamma| < 1$ ($n$ denotes $n_1 + \dots + n_d$).
\end{proposition}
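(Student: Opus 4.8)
The plan is to collapse the $d$ parameters $z_1,\dots,z_d$ into a single operator and then reduce everything to the one-variable identity \eqref{formula dec det Fredholm}. Concretely, I would fix a tuple $(z_1,\dots,z_d)$ satisfying the hypothesis, set $\mathcal M=\sum_{k=1}^d z_k\,\K_{\Lambda_k\Lambda}$, and introduce an auxiliary scalar $\gamma\in\C$. Since $\Lambda$ is relatively compact and $\K$ is locally trace class, $\K_\Lambda$ is trace class; hence each $\K_{\Lambda_k\Lambda}=p_{\Lambda_k}\K p_\Lambda$ is trace class and so is $\mathcal M$. Applying \eqref{formula dec det Fredholm} to $\mathcal M$ with scalar $\gamma$, one gets, for $|\gamma|$ small enough that $\|\alpha\gamma\mathcal M\|<1$,
\begin{equation*}
\Det(\I-\alpha\gamma\mathcal M)^{-1/\alpha}=\sum_{n=0}^\infty\frac{\gamma^n}{n!}\int_{E^n}\det\nolimits_\alpha\big(M(x_i,x_j)\big)_{1\le i,j\le n}\,\lambda(dx_1)\dots\lambda(dx_n),
\end{equation*}
where $M(x,y)=\sum_{k=1}^d z_k\,\1_{\Lambda_k}(x)\1_\Lambda(y)K(x,y)$ is the kernel of $\mathcal M$.

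The algebraic heart of the argument is the expansion of $\det\nolimits_\alpha(M(x_i,x_j))$. Because the $\Lambda_k$ are mutually disjoint, the row factor $\sum_k z_k\1_{\Lambda_k}(x_i)$ vanishes unless $x_i$ lies in exactly one $\Lambda_{k_i}$; thus the integrand is supported on the union of the products $\Lambda_{k_1}\times\cdots\times\Lambda_{k_n}$ over tuples $(k_1,\dots,k_n)\in\{1,\dots,d\}^n$, and on such a piece one has $M(x_i,x_j)=z_{k_i}K(x_i,x_j)$ (the column indicator $\1_\Lambda(x_j)$ equalling $1$ since $x_j\in\Lambda_{k_j}\subset\Lambda$). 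Writing this matrix as $\diag(z_{k_1},\dots,z_{k_n})\,(K(x_i,x_j))$ and using $\det\nolimits_\alpha(DA)=\det(D)\,\det\nolimits_\alpha(A)$ for diagonal $D$, the factor $\prod_i z_{k_i}=\prod_k z_k^{n_k}$ pulls out of the permutation sum. I would then invoke the invariance of $\det\nolimits_\alpha(K(x_i,x_j))$ under simultaneous relabeling of the points (conjugation by a permutation preserves the cycle number $\nu$), so that all $\binom{n}{n_1,\dots,n_d}$ tuples of a fixed content $(n_1,\dots,n_d)$ contribute the same integral over $\Lambda_1^{n_1}\times\cdots\times\Lambda_d^{n_d}$. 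Summing these and cancelling the multinomial coefficient against the $1/n!$ turns the $n$-th coefficient into $\sum_{n_1+\cdots+n_d=n}\prod_k(z_k^{n_k}/n_k!)\int_{\Lambda_1^{n_1}\times\cdots\times\Lambda_d^{n_d}}\det\nolimits_\alpha(K)$.

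It then remains to sum over $n$, set $\gamma=1$, and reorganize the double sum into the independent sum over $n_1,\dots,n_d$ of \eqref{formula dec det Fredholm multivariate}; the interchange is legitimate in the regime $\|\alpha\gamma\mathcal M\|<1$, where the original series converges absolutely. The main obstacle is the passage from this small-$\gamma$ regime to the target value $\gamma=1$. Here I would use the analyticity argument recalled just before the statement: the hypothesis that $\I-\alpha\gamma\mathcal M$ is invertible for every $|\gamma|<1$ makes $\gamma\mapsto\Det(\I-\alpha\gamma\mathcal M)^{-1/\alpha}$ analytic on the open unit disc, so both sides agree as analytic functions throughout $|\gamma|<1$ and the expansion is the genuine Taylor series there; evaluating at the target then yields \eqref{formula dec det Fredholm multivariate}. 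Care is needed precisely at this continuation step, since $\gamma=1$ sits on the boundary of the region where invertibility is assumed, and one must confirm that the radius of convergence of the rearranged series reaches the target point.
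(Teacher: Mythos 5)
Your proof is correct and follows essentially the same route as the paper's: apply the one-variable expansion \eqref{formula dec det Fredholm} to the trace class operator $\sum_{k=1}^d z_k\,\K_{\Lambda_k\Lambda}$, reduce its kernel's $\alpha$-determinant on each piece $\Lambda_{k_1}\times\cdots\times\Lambda_{k_n}$ to $\bigl(\prod_i z_{k_i}\bigr)\det\nolimits_\alpha\bigl(K(x_i,x_j)\bigr)$, and collect the $\binom{n}{n_1,\dots,n_d}$ equal contributions via invariance of $\det\nolimits_\alpha$ under simultaneous row--column permutation. The differences are cosmetic: the paper invokes row multilinearity where you use disjointness of supports, and your explicit auxiliary parameter $\gamma$ (including the flag about the boundary point $\gamma=1$) is if anything more careful than the paper, which applies the analytically extended formula directly.
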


\begin{proof}
	We apply the formula~\eqref{formula dec det Fredholm} to the class trace operator $\sum_{k=1}^d z_k \K_{\Lambda_k \Lambda}$ and we use the multilinearity property of the $\alpha$-determinant of a matrix with respect to its rows.\\
	
We obtain
\begin{align*}
	\Det & \left(\I - \alpha \sum_{k=1}^d z_k \K_{\Lambda_k \Lambda}\right)^{-1/\alpha} \\
	& =  \sum_{n=0}^\infty \dfrac{1}{n!} \int_{E^n} \det\nolimits_\alpha \left(\sum_{k=1}^d z_k K_{\Lambda_k \Lambda}(x_i,x_j)\right)_{1 \leq i,j \leq n} \lambda(dx_1) \dots \lambda(dx_n)\\
	& = \sum_{n=0}^\infty \dfrac{1}{n!} \, \int_{E^n} \sum_{k_1, \dots k_n = 1}^d \det\nolimits_\alpha \left(z_{k_i} \1_{\Lambda_{k_i}} (x_i) \1_\Lambda (x_j) K(x_i,x_j)\right)_{1 \leq i,j \leq n} \lambda(dx_1) \dots \lambda(dx_n)\\
	& = \sum_{n=0}^\infty \dfrac{1}{n!} \sum_{k_1, \dots k_n = 1}^d \int_{\Lambda_{k_1} \times \dots \times \Lambda_{k_n}} \det\nolimits_\alpha \left(z_{k_i} K(x_i,x_j)\right)_{1 \leq i,j \leq n} \lambda(dx_1) \dots \lambda(dx_n)\\
	& = \sum_{n=0}^\infty \dfrac{1}{n!} \sum_{k_1, \dots k_n = 1}^d \left(\prod_{i=1}^n z_{k_i} \right) \int_{\Lambda_{k_1} \times \dots \times \Lambda_{k_n}} \det\nolimits_\alpha \left(K(x_i,x_j)\right)_{1 \leq i,j \leq n} \lambda(dx_1) \dots \lambda(dx_n)\\
\end{align*}
where we have used the fact that $K_{\Lambda_k \Lambda} (x_i, x_j) = \1_{\Lambda_k} (x_i) \, \1_\Lambda (x_j) K(x_i , x_j)$ for the equality between the first and the second line.

As the value of the $\alpha$-determinant of a matrix is unchanged by simultaneous interchange of its rows and its columns, the product $z_1^{n_1} \dots z_d^{n_d}$ where $n_1 + \dots n_d = n$, will be repeated $\binom{n}{n_1 \dots n_d}$ times. This gives the desired formula.

\end{proof}

For a relatively compact set $\Lambda \subset E$ and $\Lambda_1, \dots, \Lambda_d$ mutually disjoint subsets of $\Lambda$, the computation of the Laplace functional of an $\alpha$-determinantal process for the function $f : (z_1, \dots , z_d) \mapsto - \sum_{k=1}^d (\log z_k) \1_{\Lambda_k}$, with $z_1, \dots, z_d \in (0,1]$ gives thanks to \eqref{alpha-det Laplace functional}:
\begin{eqnarray}
\label{formula pgf finite dim determinantal}
\E_{\mu_{\alpha,K,\lambda}} \left[\prod_{k=1}^d z_k^{\xi(\Lambda_k)}\right] = \Det \left(\I + \alpha \sum_{k=1}^d (1-z_k) \, \K_{\Lambda_k \Lambda}\right)^{-1/\alpha}
\end{eqnarray}
which is the probability generating function (p.g.f.) of the finite-dimensional random vector $(\xi(\Lambda_1), \dots, \xi(\Lambda_d))$.\\

For $\alpha<0$, the formula~\eqref{formula pgf finite dim determinantal} reminds the multivariate binomial distribution p.g.f. and for $\alpha>0$, the multivariate negative binomial distribution p.g.f., given by Vere-Jones in~\cite{Vere-Jones alpha-permanents applications}, in the special case where the space $E$ is finite.

\section{$\alpha$- permanental process ($\alpha > 0$)}
\label{section alpha-permanental}

\begin{proof} [Proof of Theorem~\ref{th CNS alpha-permanental}]

We first prove that the conditions are necessary. We suppose that there exists an $\alpha$-permanental process with $\alpha >0$, kernel $K$ defining the locally trace class integral operator $\K$.\\

By taking $d=1$ in the formula \eqref{formula pgf finite dim determinantal}, we have
\begin{eqnarray*}
	\E_{\mu_{\alpha,K,\lambda}} \left(z^{\xi(\Lambda)}\right) = \Det \left(\I + \alpha (1-z) \, \K_\Lambda\right)^{-1/\alpha}
\end{eqnarray*} 
for any compact set $\Lambda \subset E$ and $z \in (0,1]$.\\

Thus, $\Det(\I + \alpha (1-z) \K_\Lambda) \geq 1$ for $z \in (0,1]$. By continuity (as $z \mapsto \Det(\I + (1-z) \K_\Lambda)$ is indeed analytic on $\C$), we obtain that $\Det(\I + \alpha \K_\Lambda) \geq 1$, which is the first condition. This implies that for any compact set $\Lambda \subset E$, $\I + \alpha \K_\Lambda$ is invertible. Hence $\J_\alpha^\Lambda$ exists and we have, for any non-negative function $f$, with compact support included in $\Lambda$
\begin{align}
	& \E_{\mu_{\alpha,K,\lambda}} \left(\prod_{x \in \xi} e^{-f(x)} \right) = \Det (\I + \alpha \K [1-e^{-f}])^{-1/\alpha} \nonumber \\
	& = \Det (\I + \alpha \K_\Lambda (1-e^{-f}))^{-1/\alpha} \nonumber \\
	& = \Det (\I + \alpha \K_\Lambda)^{-1/\alpha} \Det (\I - \alpha \J_\alpha^\Lambda e^{-f})^{-1/\alpha} \nonumber \\
	\label{formula laplace janossy}
	& = \Det(\I + \alpha \K_\Lambda)^{-1/\alpha} \sum_{n=0}^\infty \dfrac{1}{n!} \int_{\Lambda^n} \left(\prod_{i=1}^n e^{-f(x_i)} \right) \det\nolimits_\alpha (J_\alpha^\Lambda(x_i,x_j))_{1 \leq i,j \leq n} \lambda(dx_1) \dots \lambda(dx_n)
\end{align}
where we have used for the equality between the first and the second line the fact that $\Det(\I + \A \B) = \Det(\I + \B \A)$, for any trace class operator $\A$,  and any bounded operator $\B$.

As the Laplace functional defines a.e. uniquely the Janossy density of a point process, one obtains: 
\begin{eqnarray*}
	\det\nolimits_\alpha (J_\alpha^\Lambda(x_i,x_j))_{1 \leq i,j \leq n} \geq 0 \text{ } \lambda^{\otimes n} \text{-a.e. } (x_1, \dots, x_n) \in E^n
\end{eqnarray*}

$j_{\alpha,n}^\Lambda(x_1, \dots, x_n) = \Det(\I + \alpha \K_\Lambda)^{-1/\alpha} \det\nolimits_\alpha (J_\alpha^\Lambda(x_i,x_j))_{1 \leq i,j \leq n}$ is the Janossy density.\\

Conversely, if we assume $\Det(\I + \alpha \K_\Lambda)^{-1/\alpha} > 0$ and $\det\nolimits_\alpha (J_\alpha^\Lambda(x_i,x_j))_{1 \leq i,j \leq n} \geq 0$ for any $n \in \N$, any compact set $\Lambda \subset E$ and any $\lambda^{\otimes n}$-a.e. $(x_1, \dots, x_n) \in \Lambda^n$, the Janossy density will be correctly defined and, on any compact set $\Lambda$, we get the existence of a point process $\xi_\Lambda$ with kernel $K_\Lambda$ (see Proposition 5.3.II. in~\cite{MR1950431} - here the normalization condition is automatic by chosing $f=0$ in~\eqref{formula laplace janossy}).

The restriction of a point process $\eta$, defined on $\Lambda' \subset E$, to a subspace $\Lambda \subset \Lambda'$ is the point process denoted $\eta|_\Lambda$, obtained by keeping the points in $\Lambda$ and deleting the points in $\Lambda' \backslash \Lambda$.

For any compact sets $\Lambda, \Lambda' \subset E$, such that $\Lambda \subset \Lambda'$, $\xi_\Lambda$ and $\xi_{\Lambda'}|_\Lambda$ have the same Laplace functional, because we have for any non-negative function f, with compact support included in $\Lambda$:
\begin{eqnarray*}
	\E\left(\exp\left(-\int_\Lambda f d\xi_{\Lambda'}|_\Lambda \right) \right) = \Det (\I + \alpha \K_{\Lambda'} [1-e^{-f}])^{-1/\alpha} \\ = \Det (\I + \alpha \K_{\Lambda} [1-e^{-f}])^{-1/\alpha} \\ = \E\left(\exp\left(-\int_\Lambda f d\xi_\Lambda \right) \right).
\end{eqnarray*}
Therefore, $\xi_\Lambda$ and $\xi_{\Lambda'}|_\Lambda$ have the same probability distribution. We say that the family $(\LL(\xi_\Lambda))$, $\Lambda$ compact set included in $E$, is consistent. 

Then we can obtain a point process on the complete space $E$ by the Kolmogorov existence theorem for point processes (see Theorem 9.2.X in~\cite{Daley Vere-Jones vol II} with $P_k(A_1, \dots, A_k; n_1, \dots, n_k) = \PP\left(\xi_{\cup_{i=1}^k A_i}(A_1)=n_1, \dots, \xi_{\cup_{i=1}^k A_i}(A_k)=n_k \right)$: as $\xi_{\cup_{i=1}^k A_i}$ is a point process, it follows that the properties (i), (iii), (iv) are fulfilled ; (ii) is fulfilled because the family $(\LL(\xi_\Lambda))$, $\Lambda$ compact set included in $E$, is consistent).

As we used, in this second part of the proof, only the fact that $\Det(\I + \alpha \K_\Lambda)^{-1/\alpha} > 0$ (instead of $\Det(\I + \alpha \K_\Lambda)^{-1/\alpha} \geq 1$), the assertion in remark~\ref{rq CNS alpha-permanental} is also proved.\\
\end{proof}

\begin{proof} [Proof of Theorem~\ref{th Spec alpha-permanental}]

We suppose there exists an $\alpha$-permanental process with $\alpha >0$, kernel $K$ defining the locally trace class integral operator $\K$.\\

Then, following the proof of the preceding theorem, we get that, for all $z \in [0,1]$
\begin{eqnarray*}
	\Det(\I + \alpha (1-z)\K_\Lambda) = \Det(\I + \alpha \K_\Lambda) \Det(\I - \alpha z\J_\alpha^\Lambda) > 0.
\end{eqnarray*}

As the power series of $\Det(\I - \alpha z\J_\alpha^\Lambda)^{-1/\alpha}$ has all its terms non-negative, 
\begin{eqnarray*}
|(\Det(\I - \alpha z \J_\Lambda^\alpha)^{-1/\alpha}| \leq (\Det(\I - \alpha \, |z| \, \J_\Lambda^\alpha)^{-1/\alpha}.
\end{eqnarray*}

If $z_0$ is a complex number with minimum modulus such that $(\Det(\I - \alpha z_0 \J_\Lambda^\alpha) = 0$, by analycity of $z \mapsto \Det(\I - \alpha z\J_\alpha^\Lambda)$ on $\C$ and $z \mapsto z^{-1}$ on $\C^*$, $\Det(\I - \alpha z\J_\alpha^\Lambda)^{-1/\alpha}$ converges for $|z| < |z_0|$ and diverges for $z=z_0$. Thus the series diverges in $z=|z_0|$ and $|z_0| > 1$. This means that the series converges for $|z| \leq 1$ thus, in this case, $\Det(\I - \alpha z \J_\alpha^\Lambda) > 0$.\\

This implies the necessary condition: $\Spec \J_\alpha^\Lambda \subset \{ z \in \C : |z| < \dfrac{1}{\alpha} \}$.\\

As $\nu$ eigenvalue of $\K$ is equivalent to $\dfrac{\nu}{1+\alpha \nu}$ eigenvalue of $\J$, and as, $\K$ and $\J$ being compact operators, their non-null spectral values are their eigenvalues, we get the other equivalent necessary condition:
\begin{eqnarray*}
\Spec \K_\Lambda \subset \{ z \in \C : \Re z > - \dfrac{1}{2 \alpha} \}.
\end{eqnarray*}

\end{proof}

\section{$\alpha$- determinantal process ($\alpha < 0$)}
\label{section alpha-determinantal}

We recall the following remark, already made for example in \cite{MR2216966}.

\begin{remark}
\label{rq def p.s. kernels det_alpha}
If we define kernels only $\lambda^{\otimes 2}$-almost everywhere, there can be problems when we consider only the diagonal terms, as $\lambda^{\otimes 2} \{ (x,x): x \in \Lambda \} =0$. For example, in the formula
\begin{eqnarray*}
	\tr K_\Lambda = \int_\Lambda K(x,x) \lambda(dx),
\end{eqnarray*}
$\tr K_\Lambda$ is not uniquely defined.
To avoid this problem, we write the kernel $K_\Lambda$ as follows:
\begin{eqnarray*}
	K_\Lambda (x,y) = \sum_{k=0}^\infty a_k \varphi_k(x) \overline{\psi_k}(y)
\end{eqnarray*}
where $(\varphi_k)_{k \in \N}$, $(\psi_k)_{k \in \N}$ are orthonormal basis in $L^2(\Lambda, \lambda)$ and $(a_k)_{k \in \N}$ is a sequence of non-negative real number, which are the singular values of the operator $\K_\Lambda$.

The functions $\varphi_k$ and $\psi_k$, $k \in \N$, are defined $\lambda$-almost everywhere, but this gives then a unique value for the expression of type
\begin{eqnarray*}
	\int_{\Lambda^n} F(K(x_i, x_j)_{1 \leq i,j \leq n}) \, G(x_1, \dots, x_n) \lambda(dx_1) \dots \lambda(dx_n)
\end{eqnarray*}
where $F$ is an arbitrary complex function from $\C^{n^2}$ and $G$ is an arbitrary complex function from $\Lambda^n$.

With this remark, the quantities that appear with $F = \det\nolimits_\alpha$ are well defined.\\

\end{remark}

\begin{lemma}
\label{lemma p.s. kernel}
Let $K$ be a kernel defined as in Remark~\ref{rq def p.s. kernels det_alpha} and defining a trace class integral operator $\K$ on $L^2(\Lambda, \lambda)$, where $\Lambda$ is a non-$\lambda$-null compact set included in the locally compact Polish space $E$, $\lambda$ be a Radon measure, $n$ an integer and $\alpha$ a real number. Let $F$ be a continuous fonction from $\C^{n^2}$ to $\C$. The three following assertions are equivalent
\begin{align*}
& \text{(i) } F (K(x_i,x_j)_{1 \leq i,j \leq n}) \geq 0 \text{ } \lambda^{\otimes n}-a.e. (x_1, \dots, x_n) \in \Lambda^n  \\
& \text{(ii) there exists a set } \Lambda' \subset \Lambda \text{ such that } \lambda(\Lambda \backslash \Lambda') = 0 \text{ and } F ((K(x_i,x_j))_{1 \leq i,j \leq n}) \geq 0 \\
& \,\,\,\,\,\,\,\, \text{ for any }(x_1, \dots, x_n) \in (\Lambda')^n  \\
& \text{(iii) there exists a version of } K \text{ such that } F ((K(x_i,x_j))_{1 \leq i,j \leq n}) \geq 0 \\
& \,\,\,\,\,\,\,\, \text{ for any }(x_1, \dots, x_n) \in \Lambda^n  \\
\end{align*}

\end{lemma}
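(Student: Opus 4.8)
My plan is to establish the cycle (iii) $\Rightarrow$ (ii) $\Rightarrow$ (i) $\Rightarrow$ (iii), with essentially all the content in the last implication. For (iii) $\Rightarrow$ (ii) I would simply take $\Lambda'=\Lambda$ together with the version of $K$ provided by (iii). For (ii) $\Rightarrow$ (i) I would observe that $\Lambda^n\setminus(\Lambda')^n$ is contained in $\bigcup_{i=1}^n\{(x_1,\dots,x_n):x_i\in\Lambda\setminus\Lambda'\}$, each term of which is $\lambda^{\otimes n}$-null because $\lambda(\Lambda\setminus\Lambda')=0$; hence $F(K(x_i,x_j))\ge 0$ already holds $\lambda^{\otimes n}$-a.e. on $\Lambda^n$. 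The only substantial task is thus to upgrade the a.e.\ inequality (i) to an everywhere inequality on a product set, and I would stress from the start that this cannot follow from a purely measure-theoretic ``rectangularization'' of the null set $\{F(K(x_i,x_j))<0\}$: a general null set in $\Lambda^n$ (for instance a diagonal) need not avoid $(\Lambda')^n$ for any full-measure $\Lambda'$. The particular structure of $F\circ K$ must be used.

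The key structural fact I would exploit is that, writing $K_\Lambda(x,y)=\sum_k a_k\varphi_k(x)\overline{\psi_k}(y)$ as in Remark~\ref{rq def p.s. kernels det_alpha}, every entry $K(x_i,x_j)$ depends on the points only through the two profiles $u(x)=(\sqrt{a_k}\,\varphi_k(x))_k$ and $v(x)=(\sqrt{a_k}\,\psi_k(x))_k$, through the inner product $K(x,y)=\langle u(x),v(y)\rangle_{\mathcal H}$ in the separable Hilbert space $\mathcal H=\ell^2(\N)$. Since $\int_\Lambda\|u(x)\|^2\,\lambda(dx)=\int_\Lambda\|v(x)\|^2\,\lambda(dx)=\sum_k a_k<\infty$ (the $a_k$ being the singular values of the trace class operator $\K_\Lambda$), the map $G=(u,v):\Lambda\to\mathcal H\times\mathcal H$ is defined for $\lambda$-a.e.\ $x$ (I would set it to a fixed reference value on the remaining null set) and is Borel into the Polish space $\mathcal H^2$. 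The point of this reformulation is that $F(K(x_i,x_j))_{1\le i,j\le n}=H(G(x_1),\dots,G(x_n))$, where $H\big((u_1,v_1),\dots,(u_n,v_n)\big):=F\big((\langle u_i,v_j\rangle_{\mathcal H})_{1\le i,j\le n}\big)$ is \emph{continuous} on $(\mathcal H^2)^n$, because the inner product is jointly continuous and $F$ is continuous by hypothesis.

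I would then push $\lambda|_\Lambda$ forward to $\mu=G_*(\lambda|_\Lambda)$, a finite nonzero Borel measure on $\mathcal H^2$ (nonzero since $\lambda(\Lambda)>0$), and set $S=\supp\mu$. Because $\mu^{\otimes n}=(G^{\times n})_*\lambda^{\otimes n}$, assertion (i) translates to $\mu^{\otimes n}(\{H<0\})=0$; as $H$ is continuous, $\{H<0\}$ is open, and an open $\mu^{\otimes n}$-null set is disjoint from $\supp(\mu^{\otimes n})=S^n$ (using that the support of a product of Borel measures on a second-countable space is the product of the supports). Hence $H\ge 0$ everywhere on $S^n$. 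To obtain (iii) I would define a version of $K$ by replacing $\varphi_k,\psi_k$ on the null set $G^{-1}(\mathcal H^2\setminus S)$ by their values at a fixed point of the nonempty set $G^{-1}(S)$, which forces $G(\Lambda)\subset S$ and therefore $F(K(x_i,x_j))\ge 0$ for every $(x_1,\dots,x_n)\in\Lambda^n$; alternatively $\Lambda'=G^{-1}(S)$ yields (ii) directly. The hard part will be the continuity of $H$: one must resist mapping $x$ to its raw coordinate profile $(\varphi_k(x),\psi_k(x))_k$ in the product topology, under which the series defining $K(x_i,x_j)$ need not pass to the limit, and instead realize the kernel values as inner products in $\mathcal H$, whose norm topology makes $H$ continuous while keeping $G$ Borel and $\lambda$-a.e.\ well defined. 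Once this is secured, the identity $\supp(\mu^{\otimes n})=(\supp\mu)^n$ and the openness of $\{H<0\}$ close the argument.
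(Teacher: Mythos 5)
Your argument is correct in substance, and it takes a genuinely different route from the paper's on the only hard implication, (i) $\Rightarrow$ (ii)/(iii). Both proofs rest on the same structural fact — $K(x,y)=\langle u(x)\,|\,v(y)\rangle_{\ell^2}$ with $u=(\sqrt{a_k}\,\varphi_k)_k$, $v=(\sqrt{a_k}\,\psi_k)_k$, well defined $\lambda$-a.e.\ because $\sum_k a_k<\infty$ — but they exploit it on opposite sides of the map. The paper works downstairs on $\Lambda$: Lusin's theorem produces an increasing sequence of compact sets $A_p$ on which $u,v$, hence $K$, are continuous; from each $A_p$ it removes the $\lambda$-null set of points having a small ball meeting $A_p$ in a null set; on the remaining set $A'_p$ every point of a tuple can be approximated by tuples of $A_p^n$ avoiding the bad null set $N$, and continuity of $F\circ K$ on $A_p^n$ gives the everywhere inequality on $(A'_p)^n$, whence $\Lambda'=\cup_p A'_p$. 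You instead work upstairs: push $\lambda|_\Lambda$ forward under $G=(u,v)$ to a finite Borel measure $\mu$ on $\ell^2\times\ell^2$, note that (i) says the \emph{open} set $\{H<0\}$ is $\mu^{\otimes n}$-null, hence disjoint from $\supp(\mu^{\otimes n})=(\supp\mu)^n$, and take $\Lambda'=G^{-1}(\supp\mu)$. Your route replaces Lusin's theorem and the paper's hand-made density-point construction ($A'_p$, $B_{p,n}$) by two standard facts about Borel measures on separable metric spaces (the support carries full measure; the support of a product is the product of the supports), which makes the argument shorter and conceptually cleaner; the paper's route stays entirely on the base space $E$ and needs no pushforward or support formalism, remaining self-contained. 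The final step (redirecting the exceptional null set to a fixed good point $x_0$ to produce the version in (iii)) is the same trick in both proofs.

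Two small repairs are needed in your write-up, neither affecting the strategy. First, in (iii) $\Rightarrow$ (ii) you cannot take $\Lambda'=\Lambda$ "together with the version": assertion (ii) concerns the original kernel $K$, so you must take $\Lambda'=\Lambda\setminus N_0$, where $N_0$ is the $\lambda$-null set on which the version's $\varphi_k,\psi_k$ differ from the original ones (off $N_0$ the two kernels coincide at every pair of points). Second, since you extend $G$ by an arbitrary reference value on the $\lambda$-null set where the profiles fail to lie in $\ell^2$, that reference value could accidentally land in $\supp\mu$; on those points the identity $F\bigl((K(x_i,x_j))_{i,j}\bigr)=H(G(x_1),\dots,G(x_n))$ fails, so $\Lambda'$ (and likewise the set of points you redirect to $x_0$ when building the version) should be $G^{-1}(\supp\mu)$ intersected with the full-measure set where $u$ and $v$ are genuinely defined. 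With these adjustments the cycle (iii) $\Rightarrow$ (ii) $\Rightarrow$ (i) $\Rightarrow$ (iii) closes correctly.
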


\begin{proof}

(i) is clearly a consequence of (ii). We assume now that (i) is satisfied and we denote by $N$ the $\lambda^{\otimes n}$-null set of $n$-tuples $(x_1, \dots, x_n) \in \Lambda^n$ such that $F ((K(x_i,x_j))_{1 \leq i,j \leq n}) < 0$. As in remark~\ref{rq def p.s. kernels det_alpha}, we write the kernel $K$ as follows
\begin{eqnarray*}
	K(x,y) = \sum_{k=0}^\infty a_k \varphi_k(x) \overline{\psi_k}(y) = \left\langle (\sqrt{a_k} \varphi_k)_{k \in  \N} (x) | (\sqrt{a_k} \psi_k)_{k \in  \N} (y) \right\rangle
\end{eqnarray*}
where $(\varphi_k)_{k \in \N}$, $(\psi_k)_{k \in \N}$ are orthonormal basis in $L^2(\Lambda, \lambda)$, $(a_k)_{k \in \N}$ is a sequence of non-negative real number, which are the singular values of the operator $\K$ and $\left\langle . | . \right\rangle$ denote the inner product in the Hilbert space $l_2(\C)$.

As $\K$ is trace class, we have $\sum_{k=0}^\infty a_k < \infty$. Hence:
\begin{eqnarray*}
	\sum_{k=0}^\infty a_k |\varphi_k(x)|^2 < \infty \text{ and } \sum_{k=0}^\infty a_k |\psi_k(x)|^2 < \infty \text{ } \lambda\text{-a.e. } x \in \Lambda 
\end{eqnarray*}

From Lusin's theorem, there exists an increasing sequence $(A_p)_{p \in \N}$ of compact sets included in $\Lambda$ such that, for any $p \in \N$
\begin{eqnarray*}
	& (\sqrt{a_k} \varphi_k)_{k \in  \N} \text{ and } (\sqrt{a_k} \psi_k)_{k \in  \N} \text{ are continuous from } A_p \text{ to $l_2(\C)$ and } \lambda(\Lambda \backslash A_p) < \dfrac{1}{p}
\end{eqnarray*}

Therefore the kernel $K : (x,y) \mapsto \left\langle (\sqrt{a_k} \varphi_k)_{k \in  \N} (x) | (\sqrt{a_k} \psi_k)_{k \in  \N} (y) \right\rangle$ is continuous on $A_p^{\,2}$.

As $E$ is a Polish space, it can be endowed with a distance that we denote by $d$. We consider the sets
\begin{eqnarray*}
	& A'_p = \{ x \in A_p : \forall r>0, \lambda(B(x,r) \cap A_p) > 0 \} \\
	& B_{p,n} = \{ x \in A_p : \lambda (B(x,1/n ) \cap A_p) = 0 \}
\end{eqnarray*}
where $B(x,r)$ is the open ball in $E$ of radius $r$ centered at $x$ and $n$ is an integer.

Let $(x_k)_{k \in \N}$ be a sequence in $B_{p,n}$ converging to $x \in A_p$.  Then we have, when $d(x,x_k) < 1/n$,
\begin{eqnarray*}
	\lambda(B(x,1/n - d(x,x_k) \cap A_p) \leq \lambda(B(x_k, 1/n) \cap A_p) = 0
\end{eqnarray*}

Therefore $\lambda(B(x,1/n) \cap A_p) = 0$ and $x \in B_{p,n}$ : $B_{p,n}$ is closed, thus compact (as it is included in the compact set $A_p$).

The set of open balls $\{B(x,1/n) : x \in B_{p,n}\}$ is a cover of $B_{p,n}$. Then, by compactness, $B_{p,n}$ can be covered by a finite numbers of such balls. As the intersections of $A_p$ and any such a ball is a $\lambda$-null set, we get $\lambda(B_{p,n}) = 0$.\\

Hence we have: $\lambda (A'_p) = \lambda \left( A_p \backslash \cup_{n \in \N} B_{p,n} \right) = \lambda (A_p) > \lambda(\Lambda) - 1/p$.\\

Let $(x_1, \dots, x_n) \in (A'_p)^n$. If $(x_1, \dots, x_n) \notin N$, then $F ((K(x_i,x_j))_{1 \leq i,j \leq n}) \geq 0$.\\

Otherwise $(x_1, \dots, x_n) \in N$. For any $i \in \llbracket 1,n \rrbracket$ and any $r>0$, we have
\begin{eqnarray*}
 \lambda(A_p \cap B(x_i,r)) > 0 \text{, then } \lambda^{\otimes n} (A_p^n \cap B((x_1, \dots, x_n),r)) = \lambda^{\otimes n} (\prod_{i=1}^n (A_p \cap B(x_i,r))) > 0.
\end{eqnarray*}
where $B((x_1, \dots, x_n),r)$ denotes the open ball of radius $r$ centered at $x$, in $E^n$ endowed with the distance $d((x_1, \dots, x_n),(y_1, \dots, y_n)) = \max\limits_{1 \leq i \leq n} d(x_i,y_i)$.

Then, as $\lambda^{\otimes n}(N) = 0$, for any $q \in \N$, there exists $(y_1^{(q)}, \dots, y_n^{(q)}) \in A_p^n \cap B((x_1, \dots, x_n),1/q) \backslash N$ and thus $(y_1^{(q)}, \dots, y_n^{(q)})$ converge to $(x_1, \dots, x_n)$ when $q \to \infty$.

As $(y_1^{(q)}, \dots, y_n^{(q)}) \notin N$, $F ((K(y_i^{(q)},y_j^{(q)}))_{1 \leq i,j \leq n}) \geq 0$.

As $K$ is continuous on $A_p^2$ and $F$ is continuous on $\C^{n^2}$, we have that the function $(x_1, \dots, x_n) \mapsto F ((K(x_i,x_j))_{1 \leq i,j \leq n})$ is continuous on $A_p^n$. Hence we have: $F ((K(x_i,x_j))_{1 \leq i,j \leq n}) \geq 0$.

Therefore, in all cases, if $(x_1, \dots, x_n) \in (A'_p)^n$, $F ((K(x_i,x_j))_{1 \leq i,j \leq n}) \geq 0$.\\

As $(A_p)_{p \in \N}$ is an increasing sequence, it is the same for $(A'_p)_{p \in \N}$. Hence we have: $\cup_{p \in \N} (A'_p)^n = \left( \cup_{p \in \N} A'_p \right)^n$.\\

We obtain:
\begin{eqnarray*}
	F ((K(x_i,x_j))_{1 \leq i,j \leq n}) \geq 0 \text{ for any } (x_1, \dots x_n) \in \left( \cup_{p \in \N} A'_p \right)^n
\end{eqnarray*} 

As $\lambda(\Lambda \backslash \left(\cup_{p \in \N} A'_p \right)) = 0$, we finally obtain (ii) with $\Lambda' = \cup_{p \in \N} A'_p$.\\

We obtained that (i) and (ii) are equivalent conditions.\\

(i) is clearly a consequence of (iii). Assume now (ii). We will define a version $K_1$ of $K$ satisfying the condition (iii).

As $\lambda(\Lambda) \neq 0$, $\Lambda' \neq \emptyset$. We set an arbitrary $x_0 \in \Lambda'$.

For $(x,x') \in \Lambda^2$, we define, $y = x$ if $x \in \Lambda'$, $y = x_0$ if $x \in \Lambda \backslash \Lambda'$, $y' = x'$ if $x' \in \Lambda'$, $y' = x_0$ if $x' \in \Lambda \backslash \Lambda'$ and $K_1 (x,x') = K (y,y')$.

For $(x_1, \dots, x_n) \in \Lambda^n$, we define, for $1 \leq i \leq n$, $y_i = x_i$ if $x_i \in \Lambda'$ and $y_i = x_0$ if $x_i \in \Lambda \backslash \Lambda'$. Then we have, $F ((K_1(x_i,x_j))_{1 \leq i,j \leq n}) = F ((K(y_i,y_j))_{1 \leq i,j \leq n}) \geq 0$ and $K_1$ is a version of $K$ satisfying the condition (iii).

\end{proof}

\begin{remark}
Let $F_n, n \in \N,$ be continuous functions from $\C^{n^2}$ to $\C$. For any non-$\lambda-null$ compact set $\Lambda$, the condition:
\begin{itemize} [label = ]
\item[(i)] $F_n ((J_\alpha^\Lambda (x_i, x_j))_{1 \leq i,j \leq n}) \geq 0$, for any $n \in \N$ and $\lambda^{\otimes n}$-a.e. $(x_1, \dots, x_n) \in \Lambda^n$
\end{itemize}
can always be replaced by the equivalent conditions:
\begin{itemize} [label = ]
\item[(ii)] there exists a set $\Lambda' \subset \Lambda$ such that $\lambda(\Lambda \backslash \Lambda') = 0$ and $F_n ((J_\alpha^\Lambda (x_i, x_j))_{1 \leq i,j \leq n}) \geq 0$, for any $n \in  \N$ and $(x_1, \dots, x_n) \in (\Lambda')^n$.
\end{itemize}
or:
\begin{itemize} [label = ]
\item[(iii)] there exists a version of the kernel J such that $F_n ((J_\alpha^\Lambda (x_i, x_j))_{1 \leq i,j \leq n}) \geq 0$, for any $n \in  \N$ and $(x_1, \dots, x_n) \in \Lambda^n$.\\
\end{itemize}
\end{remark}

\begin{proof}
	The proof of (ii) $\implies$ (iii) is done in the same way as in Lemma~\ref{lemma p.s. kernel}.
	The other parts of the proof are a direct application of Lemma~\ref{lemma p.s. kernel}. 
\end{proof}

\begin{proof} [Proof that (i) is necessary in Theorem~\ref{th CNS alpha-determinantal si I+alpha K inversible}]

This has been mentioned by Vere-Jones in \cite{Vere-Jones alpha-permanents applications} for the multivariate binomial probability distribution, which corresponds to a determinantal process with $E$ being finite. To our knowledge, this has not been proved in other cases.

We consider the $n \times n$ matrix $1_n$, whose elements are all equal to one.\\

We have:  $\prod_{j=0}^{n-1} (1+ j \alpha) = 1 + \sum_{k=1}^{n-1} \sum_{1 \leq j_1 < \dots < j_k \leq n-1} j_1 \dots j_k \,\, \alpha^k$\\

We will show by induction on $n$ that the number of permutations in $\Sigma_n$ having $n-k$ cycles for $k \neq 0$ is $a_{n k} = \sum_{1 \leq j_1 < \dots < j_k \leq n-1} j_1 \dots j_k$:
this is true for $n=2$ and $ k=1$. Assume it is true for a given $n \in \N^*$ and for any $k \in \llbracket 1, n-1 \rrbracket$. If we consider the permutations $\sigma \in\Sigma_{n+1}$ having $n+1-k$ cycles ($0 \leq k \leq n$), we have 2 cases:\\
- either $\sigma(n+1) = n+1$: there is exactly $a_{n k}$ permutations corresponding to this case (with the convention $a_{n n} = 0$, for the case $k=n$),\\
- or $\sigma(n+1) \neq n+1$. Then, if we denote $\tau_{n+1 \, \sigma(n+1)}$ the transposition in $\Sigma_{n+1}$ that exchange $n+1$ and $\sigma(n+1)$, $\tau_{n+1 \, \sigma(n+1)} \circ \sigma$ is a permutation having $n+1$ as fixed point and $n+1 - k$ other cycles (with elements in $\llbracket 1,n \rrbracket$): there is exactly $n a_{n \, k-1}$ permutations corresponding to this case.\\
Then we have
\begin{align*}
	a_{n+1 \,\, n+1-k} & = a_{n k} + n a_{n \, k-1} \\
	  & = \sum_{1 \leq j_1 < \dots < j_k \leq n-1} j_1 \dots j_k + \sum_{\substack{1 \leq j_1 < \dots < j_{k-1} \leq n-1 \\ j_k=n}} j_1 \dots j_k \\
	  & = \sum_{1 \leq j_1 < \dots < j_k \leq n} j_1 \dots j_k
\end{align*}
which is what we expected. 

Thus: $\det\nolimits_\alpha 1_n = \prod_{j=0}^{n-1} (1+ j \alpha)$.

If $\alpha < 0$ but $-1/\alpha \notin \N$, there exists therefore $n \in \N$ such that $\det\nolimits_\alpha 1_n < 0$.\\

We suppose that there exists an $\alpha$-determinantal process with $\alpha < 0$ but $-1/\alpha \notin \N$ and kernel $K$. Then we have $\det\nolimits_\alpha (K(x_i,x_j))_{1 \leq i,j \leq n} \geq 0$ $\lambda^{\otimes n}$ -a.e. $(x_1\dots,x_n) \in E^n$.

As we exclude the case of a point process having no point almost surely and there is a sequence of compact sets $\Lambda_p$ such that $\cup_{p \in \N} \Lambda_p = E$, there exists a compact set $\Lambda \in E$ such that 
\begin{eqnarray*}
	\E(\xi(\Lambda)) = \int_\Lambda K(x,x) \lambda(dx) > 0.
\end{eqnarray*}

Applying Lemma~\ref{lemma p.s. kernel}, we get that there exist a version $K_1$ of the kernel $K$ such that $\det\nolimits_\alpha (K_1(x_i,x_j))_{1 \leq i,j \leq n} \geq 0$ for any $(x_1\dots,x_n) \in \Lambda^n$. We also have:
\begin{eqnarray*}
	\int_\Lambda K(x,x) \lambda(dx) = \int_\Lambda K_1(x,x) \lambda(dx) > 0.
\end{eqnarray*}

Hence there exists $x_0 \in \Lambda$ such that $K_1(x_0,x_0) > 0$.

For $(x_1, \dots, x_n) = (x_0, \dots, x_0)$, we get:
\begin{eqnarray*}
	\det\nolimits_\alpha (K_1(x_i,x_j))_{1 \leq i,j \leq n} = K(x_0,x_0)^n \det\nolimits_\alpha 1_n < 0
\end{eqnarray*}
which is a contradiction. Therefore if $\alpha < 0$ and an $\alpha$-determinantal process exists, then $\alpha$ must be in $ \{-1/m : m \in \N\}$.\\

\end{proof}

We consider a $d \times d$ square matrix $A$. If $n_1, \dots, n_d$ are $d$ non-negative integers, $A[n_1, \dots, n_d]$ is the $(n_1 + \dots + n_d) \times (n_1 + \dots + n_d)$ square matrix composed of the block matrices $A_{i j}$:
\begin{eqnarray*}
	A[n_1, \dots, n_d] = \begin{pmatrix} A_{1 1} & A_{1 2} & \dots & A_{1 d} \\ A_{2 1} & A_{2 2} & \dots & A_{2 d} \\ \vdots & \vdots & \ddots & \vdots \\ A_{d 1} & A_{d 2} & \dots & A_{d d}  \end{pmatrix},
\end{eqnarray*}
where $A_{i j}$ is the $n_i \times n_j$ matrix whose elements are all equal to $a_{i j}$ ($1 \leq i,j \leq d$).

\begin{lemma}
\label{lemma matrix det_alpha equiv det}
	Given a $d \times d$ square matrix $A$, the following assertions are equivalent
\begin{align*}
&	\text{(i) } \det\nolimits_{-1/m} A[n_1, \dots, n_d] \geq 0, \, \forall n_1, \dots, n_d \in \N \\
&	\text{(ii) } \det\nolimits_{-1/m} A[n_1, \dots, n_d] \geq 0, \, \forall n_1, \dots, n_d \in \{0, \dots, m \} \\
&	\text{(iii) } \det A[n_1, \dots, n_d] \geq 0, \, \forall n_1, \dots, n_d \in \N \\
&	\text{(iv) } \det A[n_1, \dots, n_d] \geq 0, \, \forall n_1, \dots, n_d \in \{0,1\}
\end{align*}

\end{lemma}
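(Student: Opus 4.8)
The plan is to convert the whole statement into a question about the coefficients of a single polynomial. I would specialize the multivariate Fredholm expansion~\eqref{formula dec det Fredholm multivariate} to the finite space $\{1,\dots,d\}$ with counting measure, the operator $A$, $\Lambda=\{1,\dots,d\}$ and $\Lambda_k=\{k\}$. Since $\K_{\{k\}\Lambda}$ is then the matrix whose $k$-th row equals the $k$-th row of $A$ and whose other rows vanish, $\sum_k z_k\K_{\{k\}\Lambda}=D_zA$ with $D_z=\diag(z_1,\dots,z_d)$, and one obtains the polynomial identity
\[
\det\Big(\I+\tfrac1m D_z A\Big)^m=\sum_{n_1,\dots,n_d\ge 0}\frac{z_1^{n_1}\cdots z_d^{n_d}}{n_1!\cdots n_d!}\,\det\nolimits_{-1/m}A[n_1,\dots,n_d].
\]
Both sides are genuine polynomials, so no convergence hypothesis is needed (alternatively the identity follows from the same multilinearity computation as in the Proposition). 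Writing $B=A/m$ and $P(z)=\det(\I+D_zB)$, the left-hand side is $P(z)^m$, and $P$ is multilinear with $P(z)=\sum_{S\subseteq\{1,\dots,d\}}\big(\prod_{k\in S}z_k\big)\det B_S$ where $\det B_S=m^{-|S|}\det A_S$.

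From this identity I can read off the two ``vanishing'' equivalences. Because $\deg_{z_k}P\le 1$, the power $P^m$ has degree $\le m$ in each $z_k$, so $[z^n]P^m=0$ and hence $\det\nolimits_{-1/m}A[n]=0$ as soon as some $n_k\ge m+1$; this yields (i)$\Leftrightarrow$(ii). Likewise, if some $n_k\ge 2$ then $A[n_1,\dots,n_d]$ has two equal rows and $\det A[n_1,\dots,n_d]=0$, while for $n\in\{0,1\}^d$ the matrix $A[n]$ is exactly the principal submatrix $A_S$ with $S=\{k:n_k=1\}$; this yields (iii)$\Leftrightarrow$(iv). The remaining task is to bridge the $\det_{-1/m}$ and the $\det$ conditions. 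Since the coefficients of $P$ are the $\det B_S$ (equivalently the $\det A_S$, up to positive factors) and those of $P^m$ are the $\det\nolimits_{-1/m}A[n]/\prod_k n_k!$, conditions (iv) and (ii) translate respectively into ``$P$ has non-negative coefficients'' and ``$P^m$ has non-negative coefficients''. So the whole lemma reduces to the polynomial statement: $P^m\ge 0$ coefficientwise $\iff P\ge 0$ coefficientwise, for $P$ multilinear with $P(0)=1$.

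The implication ``$P$ non-negative $\Rightarrow P^m$ non-negative'' is immediate. For the converse I would show $c_S:=\det B_S\ge 0$ by induction on $|S|$. After setting $z_k=0$ for $k\notin S$ I may assume $S=\{1,\dots,d\}$ and that all proper principal minors are $\ge0$, i.e. all coefficients of $P$ except possibly the top one $c_{\{1,\dots,d\}}$ are non-negative. Specializing all variables to a single $z$ turns $P$ into $Q(z)=\sum_k e_k z^k$ with $e_0=1$, $e_k=\sum_{|T|=k}c_T\ge0$ for $k<d$, top coefficient $e_d=c_{\{1,\dots,d\}}$, and $Q^m$ still has non-negative coefficients. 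Everything then rests on the one-variable claim: if $e_0=1$, $e_1,\dots,e_{d-1}\ge0$ and $Q^m$ has non-negative coefficients, then $e_d\ge0$. For odd $m$ this is just $[z^{md}]Q^m=e_d^m\ge0$. For even $m$, let $k^\ast$ be the largest index $<d$ with $e_{k^\ast}>0$ (it exists since $e_0=1$); in $[z^{(m-1)d+k^\ast}]Q^m$ the total ``deficit'' below $z^{md}$ is $d-k^\ast$, and each factor must contribute either $z^d$ or a power $z^i$ with $e_i\ne0$, i.e. $i\le k^\ast$, so exactly one factor can drop below $z^d$; hence this coefficient equals $m\,e_{k^\ast}e_d^{m-1}$, whose non-negativity forces $e_d^{m-1}\ge0$ and thus $e_d\ge0$ since $m-1$ is odd. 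This closes the induction.

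The routine parts are the two vanishing equivalences and the generating-function identity. The genuinely delicate point is this converse, since ``$P^m$ coefficientwise non-negative'' does \emph{not} in general force ``$P$ coefficientwise non-negative''; one must exploit that $P$ is multilinear with constant term $1$ and that all lower minors are already known non-negative. The key device is isolating a single coefficient of $Q^m$ in which the unknown top coefficient appears linearly, multiplied by a strictly positive lower coefficient; the main obstacle is getting the bookkeeping of that coefficient right, in particular the reduction to a single ``deficit-carrying'' factor.
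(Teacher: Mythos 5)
Your proof is correct, and up to the one hard step it is the same proof as the paper's: you specialize the multivariate Fredholm expansion to the finite space $\{1,\dots,d\}$ to obtain the polynomial identity expressing $\det(I+ZA)^m$ as the generating function of the quantities $\det\nolimits_{-1/m}A[n_1,\dots,n_d]$, you get (i)$\Leftrightarrow$(ii) from the degree bound (the multilinear polynomial $\det(I+ZA)$ has $m$-th power of degree at most $m$ in each variable), (iii)$\Leftrightarrow$(iv) from repeated rows, and (iii)$\Rightarrow$(i) because an $m$-th power of a coefficientwise non-negative polynomial is coefficientwise non-negative. The divergence is at the converse implication (i)$\Rightarrow$(iii): the paper does not actually prove it, but disposes of it in one sentence by citing Vere-Jones ("we can show by induction on the order of the matrix $A$\dots"), whereas you supply a complete, self-contained argument, and it is sound. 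Your induction on $|S|$ correctly reduces to the case where every coefficient of the multilinear polynomial $P$ except the top one is already known to be non-negative; collapsing all variables to a single $z$ preserves both that sign information and the coefficientwise non-negativity of the $m$-th power; and the single-coefficient extraction works: for odd $m$ the leading coefficient $e_d^m$ of $Q^m$ settles the sign of $e_d$, while for even $m$ the maximality of $k^*$ forces every non-vanishing factor to contribute either $z^d$ or at most $z^{k^*}$, so a total deficit of $d-k^*$ can be absorbed by exactly one factor, giving $[z^{(m-1)d+k^*}]Q^m=m\,e_{k^*}e_d^{m-1}$ and hence $e_d\geq 0$ since $e_{k^*}>0$ and $m-1$ is odd. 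In short, your write-up is strictly more self-contained than the paper's: the paper's route buys brevity and a pointer to the source of the combinatorial fact (Vere-Jones's positivity argument for $\alpha$-permanents), while yours buys a verifiable proof of the only non-trivial implication in the lemma, at the cost of the deficit bookkeeping--which you got right.
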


\begin{proof}

If there exists $k \in \llbracket 1,d \rrbracket$ such that $n_k > 1$, the matrix $A[n_1, \dots, n_d]$ has at least two identical rows and its determinant is null. So it is clear that (iii) and (iv) are equivalent.\\

We have:
\begin{eqnarray}
\label{formula dec dim finie det_-1/m}
	\det(I+ZA)^m = \sum_{n_1, \dots, n_d=0}^\infty \, m^{n_1 + \dots n_d} \left(\prod_{k=1}^d \dfrac{z_k^{n_k}}{n_k!}\right) \det\nolimits_{-1/m} A[n_1, \dots, n_d]
\end{eqnarray}
where $Z = \diag(z_1, \dots, z_d)$ and $z_1, \dots, z_d$ are $d$ complex numbers. It is a special case of the formula~\eqref{formula dec det Fredholm multivariate} with $\alpha = -1/m$, finite space $E = \llbracket 1,d \rrbracket$ and reference measure $\lambda$ atomic, where each point of $E$ has measure 1, $\Lambda_k = \{ k \}$, for $k \in \llbracket 1,d \rrbracket$, $\Lambda = E$. Indeed, $ZA = \sum_{k=1}^d z_k A_k$, where $A_k$ is the $d \times d$ square matrix having the same $k^{th}$ row as $A$ and the other rows with all elements equal to 0. The matrix $A$ corresponds to the operator $\K$, the matrix $A_k$ corresponds to the operator $\K_{\Lambda_k \Lambda}$. Formula~\eqref{formula dec dim finie det_-1/m} also corresponds to the one given by Vere-Jones in~\cite{Vere-Jones permanent determinant generalization}. \\

We also have for $m=1$:
\begin{eqnarray}
\label{formula dec dim finie det}
	\det(I+ ZA) = \sum_{n_1, \dots, n_d=0}^1 \, \left(\prod_{k=1}^d \dfrac{z_k^{n_k}}{n_k!}\right) \det A[n_1, \dots, n_d].
\end{eqnarray}
as $\det A[n_1, \dots, n_d] = 0$ if there exists $k \in \llbracket 1,d \rrbracket$ such that $n_k > 1$.\\

(i) is equivalent to the fact that the multivariate power series~\eqref{formula dec dim finie det_-1/m} has all its coefficients non-negative.

(iii) is equivalent to the fact that the multivariate power series~\eqref{formula dec dim finie det} has all its coefficients non-negative.\\

The power series~\eqref{formula dec dim finie det_-1/m} being the $m^{th}$ power of the power serie~\eqref{formula dec dim finie det}, if there exists  $k \in \llbracket 1,d \rrbracket$ such that $n_k > m$, the coefficient of $\prod_{k=1}^d z^{n_k}$ is null. Therefore, (i) is equivalent to (ii).\\

For the same reason, we also have that (i) is a consequence of (iii).\\

Conversely, following Vere-Jones in~\cite{Vere-Jones alpha-permanents applications}, we can show by induction on the order of the matrix $A$, that the fact that the power series~\eqref{formula dec dim finie det_-1/m} has all its coefficients non-negative implies that the power series~\eqref{formula dec dim finie det} has all its coefficient non negative.

This proves the equivalence between (i) and (iii).
	
\end{proof}

 \begin{proposition}
\label{prop CNS alpha-determinantal si I+alpha K inversible}
Let $\alpha < 0$ and $\K$ be an integral operator such that $\I+\alpha \K_\Lambda$ is invertible, for any compact set $\Lambda \subset E$. An $\alpha$-determinantal process with kernel $K$ exists iff:
\begin{eqnarray}
\label{cns det_alpha >=0}
\det\nolimits_\alpha (J_\alpha^\Lambda (x_i, x_j))_{1 \leq i,j \leq n} \geq 0 \text{, for any } n \in \N , \text { and any compact set } \Lambda \notag \\
\lambda^{\otimes n} \text{-a.e. } (x_1, \dots, x_n) \in \Lambda^n
\end{eqnarray}

Condition $\eqref{cns det_alpha >=0}$ implies that $-\dfrac{1}{\alpha} \in \N$ and $\Det(\I + \beta \K)>0 $ for any $\beta \in [\alpha,0]$.\\

\end{proposition}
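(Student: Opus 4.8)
The plan is to follow closely the scheme of the proof of Theorem~\ref{th CNS alpha-permanental}, adapting it to the sign $\alpha<0$, and to treat separately the two implications contained in the last sentence, since the one asserting $-1/\alpha\in\N$ is precisely what makes the Fredholm expansion usable on the whole plane rather than only on a disc.

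For the necessity of \eqref{cns det_alpha >=0}, I would first invoke the argument already used to prove that $(i)$ is necessary in Theorem~\ref{th CNS alpha-determinantal si I+alpha K inversible}: existence of the process forces $-1/\alpha\in\N$, say $\alpha=-1/m$. Since $-1/\alpha\in\N$, the expansion \eqref{formula dec det Fredholm} is valid for every $z\in\C$, so starting from the Laplace functional \eqref{alpha-det Laplace functional} restricted to a compact $\Lambda$ and using the factorization $\Det(\I+\alpha\K_\Lambda(1-e^{-f}))=\Det(\I+\alpha\K_\Lambda)\,\Det(\I-\alpha\J_\alpha^\Lambda e^{-f})$ exactly as in the derivation of \eqref{formula laplace janossy}, one reads off the Janossy densities
\[
 j_n^\Lambda(x_1,\dots,x_n)=\Det(\I+\alpha\K_\Lambda)^{-1/\alpha}\det\nolimits_\alpha\big(J_\alpha^\Lambda(x_i,x_j)\big)_{1\le i,j\le n}.
\]
Because $\I+\alpha\K_\Lambda$ is invertible, $\Det(\I+\alpha\K_\Lambda)^{-1/\alpha}=\Det(\I+\alpha\K_\Lambda)^{m}=\PP(\xi(\Lambda)=0)$ is a nonzero probability, hence strictly positive; as Janossy densities are nonnegative, \eqref{cns det_alpha >=0} follows.

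Next I would prove the two implications of \eqref{cns det_alpha >=0}. That $-1/\alpha\in\N$ is obtained by mimicking the diagonal argument of Theorem~\ref{th CNS alpha-determinantal si I+alpha K inversible}, now applied to $J_\alpha^\Lambda$ instead of $K$: the case $n=1$ of \eqref{cns det_alpha >=0} gives $J_\alpha^\Lambda(x,x)\ge 0$ $\lambda$-a.e., and since $\K\not\equiv 0$ (the empty process being excluded) one can choose a compact $\Lambda$ and, via Lemma~\ref{lemma p.s. kernel}, a version with a point $x_0$ at which $J_\alpha^\Lambda(x_0,x_0)>0$. Evaluating \eqref{cns det_alpha >=0} at $(x_0,\dots,x_0)$ gives $J_\alpha^\Lambda(x_0,x_0)^n\det\nolimits_\alpha 1_n\ge 0$, and the identity $\det\nolimits_\alpha 1_n=\prod_{j=0}^{n-1}(1+j\alpha)$ established above then forces $\prod_{j=0}^{n-1}(1+j\alpha)\ge 0$ for all $n$, which for $\alpha<0$ is possible only when $-1/\alpha\in\N$. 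For the positivity of the Fredholm determinant I would use $-1/\alpha=m\in\N$ to apply \eqref{formula dec det Fredholm} to the trace class operator $\J_\alpha^\Lambda$ on the whole plane:
\[
 \Det(\I-\alpha z\J_\alpha^\Lambda)^{-1/\alpha}=\sum_{n=0}^\infty\frac{z^n}{n!}\int_{\Lambda^n}\det\nolimits_\alpha\big(J_\alpha^\Lambda(x_i,x_j)\big)\,\lambda(dx_1)\cdots\lambda(dx_n),
\]
which is the $m$-th power of the entire function $z\mapsto\Det(\I-\alpha z\J_\alpha^\Lambda)$ and whose coefficients are all $\ge 0$ by \eqref{cns det_alpha >=0}; hence the left-hand side is $\ge 1$ for $z\ge 0$. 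A continuity argument (a continuous function equal to $1$ at $z=0$ whose $m$-th power is real, positive and nonvanishing on $[0,1]$ must stay on the positive real branch) then yields $\Det(\I-\alpha z\J_\alpha^\Lambda)>0$ for $z\in[0,1]$. Finally the factorization $\I+\beta\K_\Lambda=(\I+\alpha\K_\Lambda)(\I-\alpha z\J_\alpha^\Lambda)$ with $z=1-\beta/\alpha\in[0,1]$, together with the case $z=1$, where $\Det(\I-\alpha\J_\alpha^\Lambda)=\Det(\I+\alpha\K_\Lambda)^{-1}$ gives $\Det(\I+\alpha\K_\Lambda)>0$, yields $\Det(\I+\beta\K_\Lambda)>0$ for all $\beta\in[\alpha,0]$.

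It then remains to prove sufficiency, i.e. to construct the process from \eqref{cns det_alpha >=0}. Having established $-1/\alpha\in\N$ and $\Det(\I+\alpha\K_\Lambda)>0$, the candidate Janossy densities $j_n^\Lambda=\Det(\I+\alpha\K_\Lambda)^{-1/\alpha}\det\nolimits_\alpha(J_\alpha^\Lambda(x_i,x_j))$ are nonnegative, and their total mass $\Det(\I+\alpha\K_\Lambda)^{-1/\alpha}\,\Det(\I-\alpha\J_\alpha^\Lambda)^{-1/\alpha}=\Det(\I+\alpha\K_\Lambda)^{-1/\alpha}\,\Det(\I+\alpha\K_\Lambda)^{1/\alpha}=1$, so they define a point process $\xi_\Lambda$ on each compact $\Lambda$ by Proposition~5.3.II of~\cite{MR1950431}. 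The consistency of the family $(\LL(\xi_\Lambda))_\Lambda$ and the passage to a process on all of $E$ by the Kolmogorov existence theorem (Theorem~9.2.X of~\cite{Daley Vere-Jones vol II}) are then carried out exactly as in the proof of Theorem~\ref{th CNS alpha-permanental}. The main obstacle I anticipate is the $m$-th root (single branch) argument establishing $\Det(\I-\alpha z\J_\alpha^\Lambda)>0$, rather than merely nonzero, when $\K$ is not self-adjoint, together with securing a genuinely positive diagonal value $J_\alpha^\Lambda(x_0,x_0)>0$ in the proof that $-1/\alpha\in\N$, which requires ruling out the degenerate situation where $J_\alpha^\Lambda$ vanishes on the diagonal for every compact $\Lambda$.
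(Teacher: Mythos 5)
Your proposal is correct and follows essentially the same route as the paper's own proof: Laplace-functional factorization and a.e.\ uniqueness of Janossy densities for necessity, the diagonal $\det\nolimits_\alpha 1_n$ argument (transported to $J_\alpha^\Lambda$ via Lemma~\ref{lemma p.s. kernel}) for $-1/\alpha\in\N$, the everywhere-valid Fredholm expansion with nonnegative coefficients plus a continuity argument for positivity of the determinants, and Janossy densities plus Kolmogorov extension for the construction. The two obstacles you flag at the end are exactly the points the paper itself glosses over: its converse simply asserts ``As $-1/\alpha\in\N$'' (implicitly relying on the standing exclusion of a.s.\ empty processes to produce a point with $J_\alpha^\Lambda(x_0,x_0)>0$), and it deduces $\Det(\I-\alpha\J_\alpha^\Lambda)>0$ from $\Det(\I-\alpha\J_\alpha^\Lambda)^{-1/\alpha}\ge 0$ plus invertibility, which for even $-1/\alpha$ requires precisely the single-branch/intermediate-value argument you supply.
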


\begin{proof}
We assume that there exists an $\alpha$-determinantal process $\xi$ with kernel $K$.

We already proved that it is necessary to have $-1/\alpha \in \N$.\\

By taking $d=1$ in the formula \eqref{formula pgf finite dim determinantal}, we have
\begin{eqnarray*}
	\E \left(z^{\xi(\Lambda)}\right) = \Det \left(\I + \alpha (1-z) \, \K_\Lambda\right)^{-1/\alpha}
\end{eqnarray*} 
for any compact set $\Lambda \subset E$ and $z \in (0,1]$.\\

Then $\Det \left(\I + \alpha (1-z) \, \K_\Lambda\right) > 0$ for $z \in (0,1]$, and by continuity, $\Det \left(\I + \alpha \, \K_\Lambda\right) \geq 0$. As we assumed that $\I + \alpha K_\Lambda$ is invertible, we have necessarily $\Det \left(\I + \alpha \, \K_\Lambda\right) > 0$.\\

For any non-negative function $f$, with compact support included in $\Lambda$
\begin{align*}
	\E & \left(\prod_{x \in \xi} e^{-f(x)} \right) = \Det (\I + \alpha \K [1-e^{-f}])^{-1/\alpha}\\
	& = \Det (\I + \alpha \K_\Lambda)^{-1/\alpha} \Det (\I - \alpha \J_\alpha^\Lambda e^{-f})^{-1/\alpha}\\
	& = \Det(\I + \alpha \K_\Lambda)^{-1/\alpha} \sum_{n=0}^\infty \dfrac{1}{n!} \int_{\Lambda^n} \left(\prod_{i=1}^n e^{-f(x_i)} \right) \det\nolimits_\alpha (J_\alpha^\Lambda(x_i,x_j))_{1 \leq i,j \leq n} \lambda(dx_1) \dots \lambda(dx_n)
\end{align*}

As the Laplace functional defines a.e. uniquely the Janossy density of a point process, one obtains: 
\begin{eqnarray*}
	\det\nolimits_\alpha (J_\alpha^\Lambda(x_i,x_j))_{1 \leq i,j \leq n} \geq 0 \text{ } \lambda^{\otimes n} \text{-a.e. } (x_1, \dots, x_n) \in E^n
\end{eqnarray*}

Conversely, we assume that the condition
\begin{itemize} [label =]
\item $\det\nolimits_\alpha (J_\alpha^\Lambda (x_i, x_j))_{1 \leq i,j \leq n} \geq 0$, for any $n \in \N$, $\lambda^{\otimes n}$-a.e. $(x_1, \dots, x_n) \in \Lambda^n$ and any compact set $\Lambda$.
\end{itemize}
is fulfilled. We have
\begin{eqnarray*}
	\Det(\I - \alpha z \J_\alpha^\Lambda)^{-1/\alpha} = \sum_{n=0}^\infty \dfrac{z^n}{n!} \int_{\Lambda^n} \det\nolimits_\alpha (J_\alpha^\Lambda(x_i,x_j))_{1 \leq i,j \leq n} \lambda(dx_1) \dots \lambda(dx_n)
\end{eqnarray*}

As $-1/ \alpha \in \N$, this formula is valid for any $z \in \C$. Then we obtain for $z = 1$, $\Det(\I - \alpha \J_\alpha^\Lambda)^{-1/\alpha} \geq 0$.

We also have $(\I - \alpha \J_\alpha^\Lambda)(\I + \alpha \K_\Lambda) = (\I + \alpha \K_\Lambda)(\I - \alpha \J_\alpha^\Lambda) = \I$.

Then $\Det(\I - \alpha \J_\alpha^\Lambda)>0$ and $\Det(\I + \alpha \K_\Lambda)>0$.

Thus the Janossy density is correctly defined and, on any compact set $\Lambda$ we get the existence of a point process with kernel $K$ and reference mesure $\lambda$.

Then it can be extended to the complete space $E$ by the Kolmogorov existence theorem (see Theorem 9.2.X in~\cite{Daley Vere-Jones vol II}).

\end{proof}

\begin{proof} [Proof of Theorem~\ref{th CNS alpha-determinantal si I+alpha K inversible}]

For any $m \in \N$, applying Lemma~\ref{lemma matrix det_alpha equiv det}, we have for any compact set $\Lambda$
\begin{eqnarray*}
	\det\nolimits_{-1/m} (J_{-1/m}^\Lambda (x_i, x_j))_{1 \leq i,j \leq n} \geq 0 \text{, for any } n \in \N \text{, } \text{ and any }(x_1, \dots, x_n) \in \Lambda^n
\end{eqnarray*}	
is equivalent to
\begin{eqnarray*}
	\det (J_{-1/m}^\Lambda (x_i, x_j))_{1 \leq i,j \leq n} \geq 0 \text{, for any } n \in \N \text{, } \text{ and any }(x_1, \dots, x_n) \in \Lambda^n\\
\end{eqnarray*}

Now, assume we only have
\begin{eqnarray*}
	\det\nolimits_{-1/m} (J_{-1/m}^\Lambda (x_i, x_j))_{1 \leq i,j \leq n} \geq 0 \text{, for any } n \in \N \text{, } \lambda^{\otimes n} \text{-a.e. }(x_1, \dots, x_n) \in \Lambda^n.
\end{eqnarray*}	

By lemma~\ref{lemma p.s. kernel}, for each $n \in \N$, there exists a set $\Lambda'_n \subset \Lambda$ such that $\lambda(\Lambda \backslash \Lambda'_n) = 0$ and $\det\nolimits_{-1/m} (J_{-1/m}^\Lambda (x_i,x_j))_{1 \leq i,j \leq n} \geq 0$ for any $(x_1 \dots, x_n) \in (\Lambda'_n)^n$.

If $\Lambda' = \cap_{n \in \N} \Lambda'_n$, we have $\lambda(\Lambda \backslash \Lambda') = 0$ and $\det\nolimits_{-1/m} (J_{-1/m}^\Lambda (x_i,x_j))_{1 \leq i,j \leq n} \geq 0$ for any $n \in \N$ and $(x_1 \dots, x_n) \in (\Lambda')^n$.

Then, by Lemma~\ref{lemma matrix det_alpha equiv det}, we have: $\det (J_{-1/m}^\Lambda (x_i,x_j))_{1 \leq i,j \leq n} \geq 0$, for any $n \in \N$ and $(x_1 \dots, x_n) \in (\Lambda')^n$.

Therefore, we have
\begin{eqnarray*}
	\det (J_{-1/m}^\Lambda (x_i, x_j))_{1 \leq i,j \leq n} \geq 0 \text{, for any } n \in \N \text{, } \lambda^{\otimes n} \text{-a.e. }(x_1, \dots, x_n) \in \Lambda^n.
\end{eqnarray*}	

The converse is done through a similar proof, using Lemma~\ref{lemma p.s. kernel} and~\ref{lemma matrix det_alpha equiv det}.

Thus, we obtain:
\begin{eqnarray*}
	\det\nolimits_\alpha (J_\alpha^\Lambda (x_i, x_j))_{1 \leq i,j \leq n} \geq 0 \text{, for any } n \in \N \text{, } \lambda^{\otimes n} \text{-a.e. }(x_1, \dots, x_n) \in \Lambda^n
\end{eqnarray*}	
is equivalent to
\begin{eqnarray*}
	\det (J_\alpha^\Lambda (x_i, x_j))_{1 \leq i,j \leq n} \geq 0 \text{, for any } n \in \N \text{, } \lambda^{\otimes n} \text{-a.e. }(x_1, \dots, x_n) \in \Lambda^n
\end{eqnarray*}

Theorem~\ref{th CNS alpha-determinantal si I+alpha K inversible} is then a consequence of Proposition~\ref{prop CNS alpha-determinantal si I+alpha K inversible}.

\end{proof}

\begin{proof} [Proof of Theorem~\ref{th CNS alpha-determinantal si I+alpha K_Lambda not invertible}]
We assume that there exists $\xi$ an $\alpha$-determinantal process with kernel $K$.

For $p \in (0,1)$, let $\xi_p$ be the process obtained by first sampling $\xi$, then independently deleting each point of $\xi$ with probability $1-p$. 

Computing the correlation functions, we obtain that $\xi_p$ is an $\alpha$-determinantal process with kernel $p K$.

Thus we get from Theorem~\ref{th CNS alpha-determinantal si I+alpha K inversible} that the conditions of the theorem must be fulfilled.\\

Conversely, we assume that these conditions are fulfilled. We obtain from Theorem~\ref{th CNS alpha-determinantal si I+alpha K inversible} that an $\alpha$-determinantal process  $\xi_p$ with kernel $p K$ exists, for any $p \in (0,1)$.

We consider a sequence $(p_k) \in (0,1)^\N$ converging to 1 and a compact $\Lambda$.

\begin{eqnarray*}
	\E(\exp(-t \xi_{p_k}(\Lambda)) = \Det(\I + \alpha p_k K_\Lambda (1- e^{-t}))^{-1/\alpha} \, \underset{k\to \infty}\longrightarrow \, \Det(\I + \alpha K_\Lambda (1- e^{-t}))^{-1/\alpha}
\end{eqnarray*}

As $t \mapsto \Det(\I + \alpha K_\Lambda (1- e^{-t}))^{-1/\alpha}$ is continuous in 0, $(\LL(\xi_{p_k}(\Lambda)))_{k \in \N}$ converge weakly. Thus $(\LL(\xi_{p_k} (\Lambda)))_{k \in \N}$ is tight.

$\Gamma \subset \X$ is relatively compact if and only if, for any compact set $\Lambda \subset E$, $\{ \xi(\Lambda) : \xi \in \Gamma  \}$ is bounded.

Let $(\Lambda_n)_{n \in \N}$ be an increasing sequence of compact sets such that $\cup_{n \in \N} \Lambda_n = E$.

As, for any $n \in \N$, $(\LL(\xi_{p_k} (\Lambda_n)))_{k \in \N}$ is tight, we have that, for any $\epsilon > 0$ and $n \in \N$, there exists $M_n > 0$ such that for any $k \in \N, \PP (\xi_{p_k} (\Lambda_n) > M_n) < \epsilon \, 2^{-n-1}$

Let $\Gamma = \{\gamma \in \X : \forall n \in \N, \gamma(\Lambda_n) \leq M_n \}$. It is a compact set and for any $k \in \N, \PP (\xi_{p_k} \in \Gamma^c) < \epsilon$.

Therefore, $(\LL(\xi_{p_k}))_{k \in \N}$ is tight. As $E$ is Polish, $\X$ is also Polish (endowed with the Prokhorov metric). Thus there is a subsequence of $(\LL(\xi_{p_k}))_{k \in \N}$ converging weakly to the probability distribution of a point process $\xi$.
By unicity of the distribution of an $\alpha$-determinantal process for given kernel and reference measure, $\xi$ must be an $\alpha$-determinantal process with kernel $K$, which gives the existence.

\end{proof}

\begin{lemma}
\label{lemma CNS Spec J subset [0,infty)}
Let $\J$ be a trace class self-adjoint integral operator with kernel $J$. We have
\begin{eqnarray*}
	\det(J(x_i,x_j))_{1 \leq i,j \leq n} \geq 0 \text{ for any } n\in \N, \lambda^{\otimes n} \text{-a.e.} (x_1, \dots, x_n) \in \Lambda^n
\end{eqnarray*}
if and only if
\begin{eqnarray*}
	\Spec \J \subset [0, \infty)
\end{eqnarray*}

\end{lemma}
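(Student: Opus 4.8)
The plan is to prove the two implications separately, in both cases exploiting the spectral decomposition of the self-adjoint trace class operator $\J$ together with the Fredholm determinant expansion~\eqref{formula dec det Fredholm}. I would write the eigenvalues of $\J$ as $(a_k)_{k \in \N}$ (real, since $\J$ is self-adjoint, and absolutely summable, since $\J$ is trace class) with associated orthonormal eigenfunctions $(\varphi_k)_{k \in \N}$, so that, as in Remark~\ref{rq def p.s. kernels det_alpha}, one may write
\begin{eqnarray*}
J(x,y) = \sum_{k} a_k \, \varphi_k(x) \, \overline{\varphi_k(y)}.
\end{eqnarray*}

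For the direction $\Spec \J \subset [0,\infty) \implies$ nonnegativity of the determinants, I would first note that all $a_k \geq 0$. Since $\sum_k a_k |\varphi_k(x)|^2 < \infty$ for $\lambda$-a.e. $x$, the vectors $v_i = (\sqrt{a_k}\,\varphi_k(x_i))_{k}$ lie in $l_2(\C)$ for $\lambda^{\otimes n}$-a.e. $(x_1, \dots, x_n)$, and then the matrix $(J(x_i,x_j))_{1 \leq i,j \leq n}$ is Hermitian positive semidefinite: for every $c \in \C^n$,
\begin{eqnarray*}
\sum_{i,j} \overline{c_i} \, c_j \, J(x_i,x_j) = \sum_k a_k \Big| \sum_j c_j \, \overline{\varphi_k(x_j)} \Big|^2 \geq 0.
\end{eqnarray*}
Consequently its determinant (indeed all its principal minors) is nonnegative for $\lambda^{\otimes n}$-a.e. tuple, which is exactly the first condition.

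For the converse, I would use the expansion~\eqref{formula dec det Fredholm} with $\alpha = -1$ (so that $\det\nolimits_{-1} = \det$ and $-1/\alpha = 1 \in \N$, whence the formula is valid for all $z \in \C$), giving for the trace class operator $\J$
\begin{eqnarray*}
\Det(\I + z\J) = \sum_{n=0}^\infty \frac{z^n}{n!} \int_{\Lambda^n} \det(J(x_i,x_j))_{1 \leq i,j \leq n} \, \lambda(dx_1) \dots \lambda(dx_n).
\end{eqnarray*}
On the other hand $\Det(\I + z\J) = \prod_k (1 + z a_k)$, so the coefficient of $z^n$ equals the $n$-th elementary symmetric function $e_n$ of the $(a_k)$. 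Under the hypothesis the integrand is nonnegative $\lambda^{\otimes n}$-a.e., hence $e_n \geq 0$ for every $n$, so that $\Det(\I + z\J) = \sum_n e_n z^n \geq e_0 = 1 > 0$ for every real $z \geq 0$. If $\J$ had a negative eigenvalue $a_k < 0$, then $z = -1/a_k > 0$ would be a zero of $\Det(\I + z\J)$, a contradiction. Therefore $\Spec \J \subset [0,\infty)$.

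The main obstacle is not the algebra but the measure-theoretic bookkeeping: the kernel $J$ is only defined $\lambda^{\otimes 2}$-a.e., so I must ensure that both the pointwise series representation of $J$ and the statement ``$\det(J(x_i,x_j)) \geq 0$ a.e.'' are meaningful. This is precisely what Remark~\ref{rq def p.s. kernels det_alpha} and Lemma~\ref{lemma p.s. kernel} (applied with $F = \det$) are designed to handle, and I would invoke them to pass freely between the a.e. formulation and a fixed version of the kernel. A minor point to verify is that, $\J$ being trace class, $z \mapsto \Det(\I + z\J)$ is entire, so the series above converges for all $z$ and the term-by-term comparison with $\prod_k (1 + z a_k)$ is legitimate.
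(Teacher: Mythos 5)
Your proposal is correct and takes essentially the same route as the paper: the forward direction uses the spectral decomposition $J(x,y)=\sum_k a_k\varphi_k(x)\overline{\varphi_k}(y)$ (you make explicit the Gram/positive-semidefinite matrix argument that the paper leaves implicit), and the converse uses the expansion \eqref{formula dec det Fredholm} with $\alpha=-1$, valid for all $z\in\C$, whose nonnegative coefficients force $\Det(\I+z\J)>0$ for $z\geq 0$ and thus exclude any negative eigenvalue. No gaps.
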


\begin{proof}

If we assume that the operator $\J$ is positive, the kernel can be written as follows:
\begin{eqnarray*}
	J (x,y) = \sum_{k=0}^\infty a_k \varphi_k(x) \overline{\varphi_k}(y)
\end{eqnarray*}
where $a_k \geq 0$ for $k \in \N$.\\

Hence:
\begin{eqnarray*}
	\det(J(x_i,x_j))_{1 \leq i,j \leq n} \geq 0 \text{ for any } n\in \N, \text{ and any } (x_1, \dots, x_n) \in \Lambda^n\\
\end{eqnarray*}

Conversely, assume that
\begin{eqnarray*}
	\det(J(x_i,x_j))_{1 \leq i,j \leq n} \geq 0 \text{ for any } n\in \N, \lambda^{\otimes n} \text{-a.e.} (x_1, \dots, x_n) \in \Lambda^n.
\end{eqnarray*}
From formula~\eqref{formula dec det Fredholm} with $\alpha = -1$, we have then for any z $\in \C$
\begin{eqnarray}
\label{formula dec det Fredholm alpha=-1}
\Det (\I + z \J) = \sum_{n=0}^\infty \dfrac{z^n}{n!} \int_{E^n} \det (J(x_i,x_j))_{1 \leq i,j \leq n} \lambda(dx_1) \dots \lambda(dx_n).
\end{eqnarray}

As $\J$ is assumed to be self-adjoint, its spectrum is included in $\R$. Thanks to~\eqref{formula dec det Fredholm alpha=-1}, it is impossible to have an eigenvalue in $\R_-^*$, as the power series has all its coefficients real non-negative and the first coefficient ($n=0)$ is real positive. Hence $\Spec \J \subset [0, \infty)$.

\end{proof}

\begin{proof} [Proof of Corollary~\ref{corol CNS alpha-determinantal si K selfadjoint}]

We assume: $-1 / \alpha \in \N$ and $\Spec \K \subset [0, -1/\alpha]$. Then we have, as $\K$ is self-adjoint, that for any compact set $\Lambda$, $\Spec \K_\Lambda \subset [0, -1/\alpha]$. Then $\Det(\I + \beta \K_\Lambda) > 0$ for any $\beta \in (\alpha,0]$.

If $\I + \alpha K_\Lambda$ is invertible for any compact set $\Lambda \subset E$, we have $\Spec J^\Lambda_\alpha \subset [0,\infty)$ and $J^\Lambda_\alpha$ is a trace class self adjoint operator for any compact set $\Lambda$.

Then, applying Lemma~\ref{lemma CNS Spec J subset [0,infty)}, we get that
\begin{eqnarray*}
	\det(J(x_i,x_j))_{1 \leq i,j \leq n} \geq 0 \text{ for any } n \in \N, \text{ compact set } \Lambda \text{ and } \lambda^{\otimes n} \text{-a.e.} (x_1, \dots, x_n) \in \Lambda^n 
\end{eqnarray*}

Using Theorem~\ref{th CNS alpha-determinantal si I+alpha K inversible}, we get the existence of an $\alpha$-determinantal process with kernel $K$.

When there exists a compact set $\Lambda_0$ such that $\I + \alpha K_{\Lambda_0}$ is not invertible, by the same line of proof, we obtain the announced result, using Theorem~\ref{th CNS alpha-determinantal si I+alpha K_Lambda not invertible}.\\

Conversely, we assume that there exists an $\alpha$-determinantal process with kernel $K$.

Then, from Theorem~\ref{th CNS alpha-determinantal si I+alpha K inversible} or \ref{th CNS alpha-determinantal si I+alpha K_Lambda not invertible}, we get that $-1/\alpha \in \N$.

If $\I + \alpha K_\Lambda$ is invertible for any compact set $\Lambda \subset E$, we have $\Spec J^\Lambda_\alpha \subset [0,\infty)$, using Theorem~\ref{th CNS alpha-determinantal si I+alpha K inversible} and lemma~\ref{lemma CNS Spec J subset [0,infty)}. Then $\Spec K_\Lambda \subset [0,-1/\alpha) \subset [0,-1/\alpha]$, for any compact set $\Lambda$.

If there exists a compact set $\Lambda_0$ such that $\I + \alpha K_{\Lambda_0}$ is not invertible, we have $\Spec J^\Lambda_\beta \subset [0,\infty)$ for any compact set $\Lambda$ and any $\beta \in (\alpha,0)$, using Theorem~\ref{th CNS alpha-determinantal si I+alpha K_Lambda not invertible} and lemma~\ref{lemma CNS Spec J subset [0,infty)}. Then $\Spec K_\Lambda \subset [0,-1/\beta)$ for any $\beta \in (\alpha,0)$. Therefore $\Spec K_\Lambda \subset [0,-1/\alpha]$ for any compact set $\Lambda$.

As $K$ is self-adjoint, this implies in both cases that $\Spec K \subset [0,-1/\alpha]$.

\end{proof}

\begin{remark}
Using the known result in the case $\alpha = -1$ (see for example Hough, Krishnapur, Peres and Vir{\'a}g in \cite{MR2216966}) and corollary~\ref{corol equiv exist -1/m det and det process}, one obtains a direct proof of Corollary~\ref{corol CNS alpha-determinantal si K selfadjoint}.

\end{remark}

\section{Infinite divisibility}
\label{section infinite divisibility}

\begin{proof} [Proof of Theorem~\ref{th infinie divisibilité alpha-déterminantal}]
	For $\alpha < 0$, we have proved that it is necessary to have $-1/\alpha \in \N$. 
If an $\alpha$-determinantal process was infinitely divisible, with $\alpha < 0$, it would be the sum of $N$ i.i.d $\alpha N$-determinantal processes for any $N \in \N^*$, as it can be seen for the Laplace functional formula~\eqref{alpha-det Laplace functional}.
This would imply that $-1/ (N \alpha) \in \N$, for any $N \in \N^*$, which is not possible.
Therefore, an $\alpha$-determinantal process with $\alpha < 0$ is never infinitely divisible.
\end{proof}

Some charactization on infinite divisibility have also been given in~\cite{N. Eisenbaum H. Kaspi} in the case $\alpha>0$.

\begin{proof} [Proof of Theorem~\ref{th infinie divisibilité général alpha-permanental}]
For $\alpha >0$, assume that $\Det (\I + \alpha K_\Lambda) \geq 1$ and 
\begin{eqnarray*}
\sum_{\sigma \in \Sigma_n : \nu(\sigma) = 1} \, \prod_{i=1}^n J^\Lambda_\alpha(x_i, x_{\sigma(i)}) \geq 0,
\end{eqnarray*}
for any compact set $\Lambda \subset E$, $n \in \N$ and $\lambda^{\otimes n}$-a.e. $(x_1, \dots, x_n) \in \Lambda^n$. Then we have:
\begin{align*}
\sum_{\sigma \in \Sigma_n  : \nu(\sigma) = k} \, \prod_{i=1}^n J^\Lambda_\alpha(x_i, x_{\sigma(i)}) & = \sum_{\{I_1, \dots, I_k \} \atop \text{partition of } \llbracket 1,n \rrbracket } \sum_{\sigma_1 \in \Sigma (I_1), \dots , \sigma_k \in \Sigma(I_k) : \atop \nu(\sigma_1) = \dots = \nu(\sigma_k)   = 1} \, \prod_{q=1}^k \prod_{i \in I_q} J^\Lambda_\alpha(x_i, x_{\sigma_q(i)}) \\
	& = \sum_{\{I_1, \dots, I_k \} \atop \text{partition of } \llbracket 1,n \rrbracket } \prod_{q=1}^k \left( \sum_{\sigma \in \Sigma (I_q) : \atop \nu(\sigma) = 1} \, \prod_{i \in I_q} J^\Lambda_\alpha(x_i, x_{\sigma(i)}) \right) \geq 0,
\end{align*}
for any compact set $\Lambda \subset E$, $n \in \N$, $k \in \llbracket 1,n \rrbracket$ and $\lambda^{\otimes n}$-a.e. $(x_1, \dots, x_n) \in \Lambda^n$, where, for a finite set $I$, $\Sigma(I)$ denotes the set of all permutations on $I$.

Then, for any $N \in \N^*$ and any compact set $\Lambda \in E$, $\det\nolimits_{N \alpha} (J^\Lambda_\alpha (x_i,x_j)/N)_{1 \leq i,j \leq n} \geq 0$. From Theorem~\ref{th CNS alpha-permanental}, we get that there exists a $(N \alpha)$-permanental process with kernel $K/N$.
This means that an $\alpha$-permanental process with kernel $K$ is infinitely divisible.\\

Conversely, if we assume an $\alpha$-permanental process with kernel $K$ is infinitely divisible, we get the existence of a $N \alpha$-permanental process with kernel $K/N$, for any $N \in \N^*$.

From Theorem~\ref{th CNS alpha-permanental}, we have that $\Det (\I + \alpha K_\Lambda) \geq 1$ for any compact set $\Lambda \in E$.

We also have
\begin{eqnarray*}
	\dfrac{1}{(N \alpha)^{n-1}}\det\nolimits_{N \alpha} (J^\Lambda_\alpha (x_i,x_j))_{1 \leq i,j \leq n} \geq 0,
\end{eqnarray*}
 for any $N \in \N^*$, any $n \in \N$, any compact set $\Lambda \in E$ and $\lambda^{\otimes n}$-a.e. $(x_1, \dots, x_n) \in \Lambda^n$.

When N tends to $\infty$, we obtain:
\begin{eqnarray*}
\sum_{\sigma \in \Sigma_n : \nu(\sigma) = 1} \,\, \prod_{i=1}^n J^\Lambda_\alpha(x_i, x_{\sigma(i)}) \geq 0,
\end{eqnarray*}
which is the desired result.

\end{proof}

\begin{proof} [Proof of Theorem~\ref{th infinie divisibilité self-adjoint alpha permanental}]
We use the argument of Griffiths in~\cite{Griffiths} and Griffiths and Milne in~\cite{Griffiths and Milne}. Assume
\begin{eqnarray*}
\sum_{\sigma \in \Sigma_n : \nu(\sigma) = 1} \,\, \prod_{i=1}^n J^\Lambda_\alpha(x_i, x_{\sigma(i)}) \geq 0,
\end{eqnarray*}
for any $n \in \N$ and any $(x_1, \dots, x_n) \in \Lambda^n$.

The condition $J^\Lambda_\alpha (x_1,x_2) \dots J^\Lambda_\alpha(x_{n-1},x_n) J^\Lambda_\alpha(x_n,x_1) \geq 0$ is satisfied for the elementary cycles, i.e. cycles such that $J^\Lambda_\alpha(x_i,x_j) = 0$ if $i<j+1$ and ($i \neq 1$ or $j \neq n$).
Then it can be extended to any cycle by induction, using $J^\Lambda_\alpha (x_i,x_j) = J^\Lambda_\alpha (x_j,x_i)$.\\

With Lemma~\ref{lemma p.s. kernel}, we can then extend the proof to the case when 
\begin{eqnarray*}
\sum_{\sigma \in \Sigma_n : \nu(\sigma) = 1} \,\, \prod_{i=1}^n J^\Lambda_\alpha(x_i, x_{\sigma(i)}) \geq 0,
\end{eqnarray*}
for any $n \in \N$ and $\lambda^{\otimes n}$-a.e. $(x_1, \dots, x_n) \in \Lambda^n$.

\end{proof}

\begin{remark}
	Note that the argument from Griffiths and Milne in~\cite{Griffiths} and~\cite{Griffiths and Milne} is only valid for real symmetric matrices.
\end{remark}

\section*{Acknowledgements}
I express sincere thanks to Nathalie Eisenbaum for her support in the writing of this paper. I also thank the referee for his valuable comments that helped in improving the exposition.

\begin{multicols}{2}
Université Pierre et Marie Curie\\
LPMA\\
Case courrier 188\\
4, place Jussieu\\
75252 Paris Cedex 05\\
France\\
\columnbreak
\newline
\newline
Telecom ParisTech\\
LTCI\\
46, rue Barrault \\
75634 Paris Cedex 13 \\
France

\end{multicols}

\end{document}